\numberwithin{equation}{section}
\newtheorem{thm}{Theorem}[section]
\newtheorem{lem}[thm]{Lemma}
\newtheorem{prop}[thm]{Proposition}
\newtheorem{cor}[thm]{Corollary}
\theoremstyle{definition}
\newtheorem{defn}[thm]{Definition}
\theoremstyle{remark}
\newtheorem{rmk}[thm]{Remark}
\newtheorem{ex}[thm]{Example}
\newtheorem{exs}[thm]{Examples}
\newtheorem{notn}[thm]{Notation}
\newcommand \vp{\varphi}
\newcommand \tot{\widetilde{\otimes}}
\newcommand\op{\mathcal}
\newcommand\cat{\mathsf}
\newcommand\seq[1]{{#1}\cat{Seq}}
\newcommand\sseq{\seq{\mathfrak S}}
\newcommand\adjunct[4]{\xymatrix{#1\ar @<1.25ex>[rr]^{#3}&\perp&#2\ar @<1.25ex>[ll]^{#4}}}
\newcommand\map{\operatorname{Map}}
\newcommand\bimod[1]{\cat{Bimod}_{#1}}
\newcommand\Bimod[2]{\cat{Bimod}_{#1,#2}}
\newcommand\Biset[2]{\cat{Biset}_{#1,#2}}
\newcommand\id{\text{Id}}
\newcommand\G{\mathbb G}
\newcommand\N{\mathbb N}
\newcommand\R{\mathbb R}
\renewcommand\S{\cat S}
\newcommand\A{\cat A}
\newcommand\F{\cat F}
\begin{document}
\title {The Boardman-Vogt tensor product of operadic bimodules}
\author{William Dwyer}
\address{Department of Mathematics, University of Notre Dame, Notre
  Dame, IN 46556, USA}
\email{dwyer.1@nd.edu}
\author {Kathryn Hess}
\address{MATHGEOM\\
    Ecole Polytechnique F\'ed\'erale de Lausanne \\
    CH-1015 Lausanne \\
    Switzerland}
    \email{kathryn.hess@epfl.ch}

\date \today

 \keywords {Operad, Boardman-Vogt tensor product, bimodule} 
 \subjclass [2010] {Primary:  18D50; Secondary: 18D10, 55P48}
 \begin{abstract} We define and study a lift of the Boardman-Vogt tensor product of operads to bimodules over operads. 
 \end{abstract}
 
 \maketitle

 \tableofcontents


\section*{Introduction}

Let $\cat {Op}$ denote the category of symmetric operads  in the monoidal category $\cat S$ of  simplicial sets. The Boardman-Vogt tensor product \cite{boardman-vogt:lnm}
$$-\otimes -: \cat {Op} \times \cat {Op} \to \cat {Op},$$
which endows the category $\cat {Op}$ with a symmetric monoidal structure, codifies interchanging algebraic structures.  For  all $\op P, \op Q\in \cat {Op}$, a $(\op P\otimes \op Q)$-algebra can be viewed as a $\op P$-algebra in the category of $\op Q$-algebras or as a $\op Q$-algebra in the category of $\op P$-algebras.  In this article we lift  the Boardman-Vogt tensor product to the category of composition bimodules over operads and study the properties of the lifted tensor product. 

The lifted Boardman-Vogt tensor product is an essential tool in two articles that we are currently preparing.  One of these articles concerns the space of configurations in a product of framed manifolds, while in the second we generalize \cite{dwyer-hess}, building an ``operadic'' model for the space of long links in $\R^{m}$ for $m\geq 4$.

Let $\op P, \op Q \in \cat {Op}$. Let $\Bimod{\op P}{\op Q}$ denote the category of composition bimodules over $\op P$ on the left and $\op Q$ on the right. An object of $\Bimod{\op P}{\op Q}$ is a symmetric sequence in $\cat S$ endowed with a left action  of $\op P$ and a right action of $\op Q$, with respect to the composition monoidal product $\circ$ of symmetric sequences, which are appropriately compatible. A pair of operad morphisms $\vp:\op P\to \op P'$ and $\psi:\op Q \to \op Q'$ gives rise to a functor $(\vp^{*},\psi^{*}):\Bimod{\op P'}{\op Q'}\to \Bimod{\op P}{\op Q}$ by restriction of coefficients. 

Gathering together all composition bimodules over all operads, we form a category $\cat {Bimod}$. An object of $\cat {Bimod}$ is a composition bimodule over a pair of operads $(\op P,\op Q)$.  A morphism in $\cat {Bimod}$ from a $(\op P, \op Q)$-bimodule $\op M$ to a $(\op P', \op Q')$-bimodule $\op M'$ consists of a triple $(\vp,\psi, f)$, where $\vp:\op P\to \op P'$ and $\psi:\op Q \to \op Q'$ are operad morphisms, and $f: \op M \to (\vp^{*},\psi^{*})(\op M')$ is a morphism of $(\op P,\op Q)$-bimodules.  There is an obvious projection functor $\Pi: \cat {Bimod}\to \cat {Op}\times \cat {Op}$, which admits a section $$\Gamma: \cat {Op}\times\cat {Op} \to \cat {Bimod}: (\op P, \op Q)\mapsto \op P\circ \op Q,$$
where $\op P\circ \op Q$ is viewed as the free $(\op P,\op Q)$-bimodule on a generating symmetric sequence that is a singleton concentrated in arity 1.

In this article we construct a functor 
$$-\tot-: \cat {Bimod}\times \cat {Bimod} \to \cat {Bimod}$$
such that both $\Pi$ and $\Gamma$ are strictly monoidal with respect to $\tot$ and $\otimes$ (Theorem \ref{thm:tot}).  In other words, if  $\op M$ is a $(\op P,\op Q)$-bimodule and $\op M'$ is a $(\op P',\op Q')$-bimodule, then $\op M\tot \op M'$ is an $(\op P\otimes \op P', \op Q\otimes \op Q')$-bimodule, and $(\op P\circ \op Q)\tot (\op P'\circ \op Q')=(\op P\otimes \op P') \circ (\op Q\otimes \op Q')$.

Underlying the lift $-\tot-$ of the Boardman-Vogt tensor product is an intriguing closed, symmetric monoidal structure on the category of symmetric sequences, which we call the \emph{matrix monoidal structure} (Section \ref{sec:mms}).  To the best of our knowledge, this monoidal structure has not appeared before in the literature, though it arises quite naturally.  We provide an explicit description of the adjoint hom-functor (Proposition \ref{prop:adjunction}), in terms of a certain endofunctor on the category of symmetric sequences, which we call the \emph{divided powers functor} (Definition \ref{defn:divpow}).  The key to constructing the lift $-\tot-$ is the existence of a well-behaved natural transformation that intertwines the matrix monoidal structure and the composition monoidal structure on the category of symmetric sequences in a way compatible with the multiplication on the Boardman-Vogt operad (Proposition \ref{prop:sigma} and Theorem \ref{thm:tau}).

As the divided powers functor is interesting in itself, we also elaborate on its relationship to standard operadic algebra.  We show, in particular, that the divided powers functor preserves operadic bimodule structure (Proposition \ref{prop:bimod}), which enables us to prove that the monoidal structure on $\cat {Bimod}$ is closed (Proposition \ref{prop:closed}).  The divided powers functor also converts operads endowed with an axial structure (Definition \ref{defn:axial}) into nonunital operads (Proposition \ref{prop:axial}).  We provide several examples of such operads. Finally, we observe that the divided powers functor is monoidal with respect to the levelwise and graded monoidal structures on symmetric sequences (Proposition \ref{prop:grmon}).

We prove Proposition \ref{prop:sigma} and Theorem \ref{thm:tau} in the appendix, as their proofs are rather long and technical and might distract the reader from the actual construction of the lifted Boardman-Vogt tensor product, if left in the main body of the text. These proofs involve a coordinate-free approach to symmetric sequences and their various monoidal structures, which may be of independent interest.

\subsection*{Conventions}

If $A$ and $B$ are objects in a small category $\cat C$, then $ \cat C(A,B)$ denotes the set of morphisms in $\cat C$ with source $A$ and target $B$.   We often denote the identity morphism on an object $A$ also by $A$.   

We denote the category of simplicial sets by $\S$.  If $G$ is any group, then $\S_{G}$ denotes the category of simplicial sets endowed with a simplicial $G$-action and of $G$-equivariant simplicial morphisms.  

We denote the symmetric group on $n$ letters by $\mathfrak S_{n}$.

To limit confusion, we reserve the symbol $\circ$ for the composition product of (symmetric) sequences to as great an extent as possible, denoting composition of composable morphisms $f:A\to B$ and $g:B\to C$ simply $gf:A\to C$.

\section{Lifting the Boardman-Vogt tensor product}

Throughout this section we work in the category $\cat S$ of simplicial sets, endowed with its cartesian monoidal structure.  Similar constructions and arguments work when the underlying category is that of compactly generated Hausdorff spaces with its usual monoidal structure.

\subsection{The Boardman-Vogt tensor product of operads}

Before recalling the Boardman-Vogt tensor product of operads, we introduce some useful notation.  We assume that the reader is familiar with the usual composition monoidal structure on the category of symmetric sequences in a cocomplete monoidal category, denoted $\circ$ in this article, and with operads, the monoids in this category.  Relevant introductory references include \cite{fresse}, \cite{loday-vallette}, and  \cite{markl-shnider-stasheff}.  We recall that the category $\cat {Op}$ of operads in simplicial sets is cocomplete; see, e.g., \cite[Section 4]{dwyer-hess} for a discussion of coproducts in $\cat {Op}$ and their properties.

\begin{notn}  For any two (symmetric) sequences $\op X=\big(\op X(n)\big)_{n\geq 0}$ and $\op Y=\big(\op Y(n)\big)_{n\geq 0}$ of simplicial sets, a representative of a typical element of arity $n$ in the composition product $\op X \circ \op Y$ of the two sequences  is denoted $(x; y_{1},...,y_{k};\tau)$, where $x\in \op X(k)$ and $y_{i}\in \op Y(n_{i})$ for $1\leq i\leq k$, with $\sum _{i}n_{i}=n$, and $\tau \in \mathfrak S_{n}$.   The right action of $\mathfrak S_{n}$ on $\op X\circ \op Y$ is given by
$$(x; y_{1},...,y_{k};\id)\cdot \tau=(x; y_{1},...,y_{k};\tau).$$

Recall that if $\tau_{i}\in \mathfrak S_{n_{i}}$ for $1\leq i\leq k$, and $n=\sum _{i=1}^{k}n_{i}$, then $\tau _{1}\oplus \cdots \oplus \tau _{k}\in\mathfrak S_{n}$ is the permutation ``block by block'' specified by
$$(\tau _{1}\oplus \cdots \oplus \tau _{k})(l)=\tau _{j}\big(l-\sum_{i=1}^{j-1}n_{i}\big) + \sum_{i=1}^{j-1}n_{i}\quad\text{for all}\quad  \sum_{i=1}^{j-1}n_{i}<l\leq  \sum_{i=1}^{j}n_{i}.$$
The equivalence relation on  representatives of elements of $\op X \circ \op Y$ satisfies
$$(x; y_{1},...,y_{k};\tau _{1}\oplus \cdots \oplus \tau _{k})\sim (x; y_{1}\cdot \tau_{1}^{-1}, \cdots, y_{k}\cdot \tau_{k}^{-1};\id)$$
and
$$(x\cdot \sigma^{-1}; y_{1},...,y_{k};\id)\sim (x; y_{\sigma(1)},...,y_{\sigma(k)};\id)$$
for all $\sigma \in \mathfrak S_{k}$ and $\tau_{i}\in \mathfrak S_{n_{i}}$ for $1\leq i\leq k$, where $x\in \op X(k)$ and $y_{i}\in \op Y(n_{i})$ for $1\leq i\leq k$.

If $\op P$ is an operad with multiplication map $\mu:\op P\circ \op P\to \op P$, and $p\in \op P(k)$, $p_{i}\in \op P(n_{i})$ for $1\leq i\leq k$ with $\sum _{i}n_{i}=n$, then we write 
$$p(p_{1},...,p_{k}):=\mu (p;p_{1},...,p_{k};\id)\in \op P(n).$$
Note that since $\mu$ is equivariant, it is specified by its values on elements of $\op P\circ \op P$ with representatives of the form $(p;p_{1},...,p_{k};\id)$
\end{notn}

\begin{defn}\cite{boardman-vogt:lnm}, \cite{dunn} The \emph{Boardman-Vogt tensor product} of operads $\op P$ and $\op Q$ is the operad $\op P\otimes \op Q$ that is the quotient of the coproduct $\op P\coprod \op Q$ of operads by the equivalence relation generated by
$$(p;\underbrace{q,..,q}_{k};\id) \sim (q;\underbrace{p,...,p}_{l};\tau_{k,l})$$
for all $p\in \op P(k)$ and $q\in \op Q(l)$, where $\tau_{k,l}\in \mathfrak S _{kl}$ is the transpose permutation that ``exchanges rows and columns'', i.e., for all $1\leq m=(i-1)l+j\leq kl$, where $1\leq i\leq k$ and $1\leq j\leq l$, 
$$\tau_{k,l}(m)=(j-1)k+i.$$
\end{defn}

\begin{notn} We let $p\otimes q$ denote the common equivalence class of  
$$(p;\underbrace{q,..,q}_{k};\id)\quad\text{and}\quad(q;\underbrace{p,...,p}_{l};\tau_{k,l})$$ in $(\op P\otimes \op Q)(kl)$
for all $p\in \op P(k)$ and $q\in \op Q(l)$.
\end{notn}

\centerline{\includegraphics [scale=1.8]{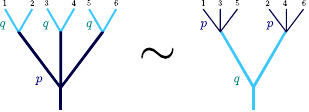}}

\begin{rmk}\label{rmk:switch} In terms of the notation above, if $p\in \op P(k)$ and $q\in \op Q(l)$, then 
$$p\otimes q = q\otimes p \cdot \tau_{k,l}.$$
\end{rmk}

\begin{rmk}  Given two symmetric operads, one is naturally led to consider the map that sends a pair $(p,q)$ with $p\in \op P(k)$ and $q\in \op Q(l)$ to $p\otimes q\in (\op P\otimes \op Q)(kl)$.  For this map to respect arity, we need to define a  monoidal product on symmetric sequences with respect to which the pair $(p,q)$ is in arity $kl$.  This observation is the key to our lift of the Boardman-Vogt tensor product in section \ref{sec:bvt}, of which we give an indication now.

Suppose that $\op X$ and $\op Y$ are left modules (with respect to the composition monoidal product $\circ$) over the operads $\op P$ and $\op Q$, respectively.  Roughly speaking, the left $(\op P\otimes \op Q)$-module $\op X \tot \op Y$ is generated by elements of the form $x\square y$, where  the arity of $x\square y$ is the product of the arities of $x$ and $y$.  Moreover, for $p\in \op P(k)$ and $q\in \op Q(l)$, relations like 
$$p(x_{1}\square y,..., x_{k}\square y)=p(x_{1},...,x_{k})\square y) \quad\text{ and }\quad q(x\square y_{1},..., x\square y_{l})=x\square q(y_{1},...,y_{l})$$
hold, along with appropriate relations involving symmetric groups.
\end{rmk}

\subsection{Sequences concentrated in arity 1}

Before explaining how to lift the Boardman-Vogt tensor product in the general case, we analyze the construction in the special case of sequences $\op X$ concentrated in arity 1, i.e., $\op X(n)= \emptyset$ if $n\not=1$. Operads concentrated in arity 1 are nothing but simplicial monoids, while operadic bimodules concentrated in arity 1 are just simplicial bisets.

If $P$ and  $Q$ are simplicial monoids, seen as operads concentrated in arity 1, then their Boardman-Vogt tensor product is  their cartesian product: $P\otimes Q=P\times Q$, endowed with its usual componentwise multiplicative structure.  If $M$ and $M'$ are a simplicial $(P,Q)$-biset and a simplicial $(P',Q')$-biset, respectively, then $M\times M'$ is naturally a simplicial $(P\times P', Q\times Q')$-biset, endowed with componentwise left and right actions.  The lift of the Boardman-Vogt tensor product is thus completely trivial in the case of sequences concentrated in arity 1: $M\tot M'=M\times M'$, endowed with the componentwise actions.  To see how to generalize this lifting to general symmetric sequences, however, we must look more closely at what is happening in this easy case. 

For any simplicial monoids $P$ and $Q$, let $F_{P,Q}: \cat S \to \Biset PQ$ denote the free $(P,Q)$-biset functor, i.e., $F_{P,Q}(X)= P\times X\times Q$, endowed with the obvious left $P$-action and right $Q$-action.  Observe that for all simplicial sets $X$ and $Y$ and all simplicial monoids $P$, $P'$, $Q$ and $Q'$, there is a natural simplicial isomorphism of $(P\otimes P', Q\otimes Q')$-bisets
\begin{align*}
F_{P,Q}(X)\tot F_{P',Q'}(X') &\to F_{P\times P', Q\times Q'}(X\times X')\\
\big ((p,x,q), (p',x',q')\big) &\mapsto \big((p,p'), (x,x'), (q,q')\big),
\end{align*}
with underlying simplicial map
\begin{equation}\label{eqn:transf} \upsilon_{X,X'}: (P\times X\times Q)\times (P'\times X'\times Q')\to (P\times P')\times (X\times X')\times (Q \times Q').
\end{equation} 
The existence of this natural map implies that every pair of morphisms of bisets
$$a: F_{P,Q}(X)\to F_{P,Q}(Y)\quad\text{and}\quad a': F_{P',Q'}(X')\to F_{P',Q'}(Y')$$ 
 induces a morphism of bisets
 $$a\tot b: F_{P\times P', Q\times Q'}(X\times X') \to F_{P\times P', Q\times Q'}(Y\times Y'),$$
 which is determined on the generators by the simplicial map
 $$X\times X' \xrightarrow {a^{\sharp}\times b^{\sharp}} (P\times Y\times Q)\times (P'\times Y'\times Q') \xrightarrow {\upsilon_{X,X'}} (P\times P')\times (Y\times Y')\times (Q\times Q'),$$
 where $a^{\sharp}$ and $b^{\sharp}$ are the simplicial maps given by restriction of $a$ and $b$ to the generators.
 The tensor product of arbitrary bisets $M$ and $M'$ can then be constructed as a colimit of free bisets, since every biset is a coequalizer of free bisets.   It is easy, and a good warm up for the general case, to check that via this more complicated approach, one obtains $M\tot M'\cong M\times M'$, as expected.
 
We next generalize this approach to the case of arbitrary sequences.  For sequences concentrated in arity 1, the Boardman-Vogt tensor product of operads, the composition product of sequences and the cartesian product of generating simplicial sets all coincide.  In the general case, the three monoidal structures in play differ greatly.  Since the Boardman-Vogt tensor product and the composition product are well known, we need only to determine what monoidal structure on symmetric sequences should play the role of the cartesian product of generating simplicial sets, which is the goal of the next section.

\subsection{The matrix monoidal structure}\label{sec:mms}

Let $\G$ be a totally disconnected groupoid with set of objects $\mathbb N$. Let $G_{n}=\G(n,n)$, with neutral element $e_{n}$.  Suppose that $\G$ admits a monoidal structure $\nu: \G \times \G \to \G$ that is multiplicative on objects, i.e., $\nu (m,n)=mn$.  In particular, for every $m,n\in \mathbb N$, there is a homomorphism
$$\nu_{m,n}:G_{m}\times G_{n}\to G_{mn},$$
and
$$\nu_{lm,n}(\nu_{l,m}\times \id_{n})=\nu_{l,mn}(\id_{l}\times \nu_{m,n}):G_{l}\times G_{m}\times G_{n}\to G_{lmn}$$
for all $l,m,n\in \mathbb N$. 

\begin{exs}\label{exs} Of course the groupoid $\mathbb N$, where each component group is trivial, provides an example of such a monoidal groupoid. Another important example is the symmetric groupoid $\mathfrak S$, where
$\nu_{m,n}:\mathfrak S_{m}\times \mathfrak S_{n}\to \mathfrak S_{mn}$ is defined by
$$\nu_{m,n}(\sigma, \tau)(i,j)=\big(\sigma(i), \tau (j)\big).$$
Here we view $\mathfrak S _{mn}$ as the group of permutations of $\{1,...,m\}\times \{1,...,n\}$, by identifying the pair $(i,j)$ with the number $(i-1)n+j$ for all $1\leq i\leq m$ and $1\leq j\leq n$.

The braid groupoid $\mathbb B$ and the pure braid groupoid $\mathbb P$ admit similar monoidal structures.
\end{exs}

Let $\seq{\G}$ denote the category of $\G$-sequences of simplicial sets, i.e., of functors from $\G^{op}$ to $\cat{S}$. Objects of $\seq{\G}$ consist of sequences $\op X=\big(\op X(n)\big)_{n\geq 0}$ of objects in $\cat {S}$ together with actions $\op X(n)\times G_{n}\to \op X(n)$ for all $n\geq 0$.  The homomorphisms $\nu_{m,n}$ induce actions of $G_{m}$ and $G_{n}$ on $\op X(mn)$ for all $m$ and $n$, given by
$$\op X(mn) \times G_{m}\to \op X(mn): (x,a)\mapsto x \cdot \nu_{m,n}(a,e_{n})$$
and
$$\op X(mn) \times G_{n}\to \op X(mn): (x,b)\mapsto x \cdot \nu_{m,n}(e_{m},b).$$
Note that the actions of $G_{m}$ and $G_{n}$ on $\op X(mn)$ commute, since for all $a\in G_{m}$ and $b\in G_{n}$,
$$\nu_{m,n}(a,e_{n})\nu_{m,n}(e_{m},b)=\nu_{m,n}(a,b)=\nu_{m,n}(e_{m},b)\nu_{m,n}(a,e_{n}),$$
as $\nu_{m,n}$ is a homomorphism.

Recall that for any monoidal category $(\cat V, \wedge, I)$, Day convolution \cite{day} gives rise to a closed monoidal structure on the presheaf category $\cat{Set}^{\cat V^{op}}$ with respect to which the Yoneda embedding $\cat V \to \cat{Set}^{\cat V^{op}}$ is strongly monoidal. Applying Day convolution to $\nu: \G \times \G \to \G$, we obtain a monoidal product
$$-\square -: \seq{\G} \times \seq{\G} \to \seq{\G},$$
which we call the \emph{matrix monoidal structure}, specified by
$$(\op X \square \op Y)(n)=\coprod_{lm=n}\big(\op X(l)\times \op Y(m)\big)\times _{G_{l}\times G _{m}} G _{n},$$
where the action of $G_{l}\times G_{m}$ on $G_{n}$ is given by $\nu_{l,m}$.   The unit for this monoidal structure is the same as the unit for the composition monoidal structure, i.e., the sequence $\op J$ with $\op J(1)$ a singleton and $\op J(n)$ empty for all $n\not =1$.

To describe the right adjoint to $-\square \op Y$, for a fixed $\G$-sequence $\op Y$, we need the following definition.

\begin{defn}\label{defn:divpow} The \emph{$n^{\text{th}}$-divided powers functor} associated to $(\G, \nu)$, 
$$\gamma^{\G}_{n}: \seq{\G} \to \seq{\G},$$ 
is defined on objects by
$$\gamma^{\G}_{n}(\op X)(m)=\nu_{m,n}^{*}\op X(mn)^{},$$
i.e., the $G_{mn}$-action on $\op X(mn)$ is pulled back by $\nu_{m,n}$ to an action of $G_{m}\times G_{n}$ and then implicity to an action of $G_{m}$.  

The \emph{graded divided powers functor} is defined by
$$\gamma^{\G}_{\bullet}:\seq{\G} \to (\seq{\G})^{\mathbb N}:\op X\mapsto \big(\gamma^{\G}_{n}(\op X)\big)_{n\geq 0}.$$
\end{defn}

\begin{rmk}\label{rmk:action} Since $\gamma^{\G}_{n}(\op X)(m)=\nu_{m,n}^{*} \op X(mn)$, it admits a natural $G_{n}$-action, for all $m$. It follows that for all $n$, $\gamma^{\G}_{n}(\op X)$ is a sequence in $\S_{G_{n}}$.
\end{rmk}

\begin{notn} When $\G=\N$, we denote suppress $\N$ from the notation for the (graded) divided powers functor.  
\end{notn}

Unraveling Day's constructions, we obtain the following formula for the internal hom corresponding to the monoidal product $-\square -$ on $\seq{\G}$.  In the proof, we check the adjunction by hand, to clarify the roles of the various group actions, without appealing to the formalism of Day convolution.

\begin{notn}   For any two $\G$-sequences $\op X$ and $\op Y$, let $\map_{\G} (\op X, \op Y)$ denote the simplicial mapping space, i.e.,
$$\map_{\G}(\op X, \op Y)=\prod_{n} \operatorname{map}_{G_{n}} \big(\op X(n), \op Y(n)\big),$$
where $\operatorname{map}_{G_{n}}$ denotes the usual simplicial mapping space of equivariant maps between simplicial $G_{n}$-sets.  
\end{notn}

\begin{rmk}  For any $\G$-sequences $\op Y$ and $\op Z$, it follows from Remark \ref{rmk:action} that the sequence $\Big( \map_{\G}\big(\op Y, \gamma^{\G}_{n}(\op Z)\big)\Big)_{n\geq 0}$ of simplicial sets is itself a $\G$-sequence.  
\end{rmk}

\begin{prop}\label{prop:adjunction} For every $\op Y\in \seq{\G}$, there is an adjunction
$$\adjunct{\seq{\G}}{\seq{\G}}{-\square \op Y}{\map_{\G}\big(\op Y, \gamma^{\G}_{\bullet}(-)\big)},$$
which is natural in $\op Y$.
\end{prop}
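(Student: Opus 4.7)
The plan is to establish the adjunction by constructing, directly from the definitions, a natural bijection
$$\seq{\G}(\op X \square \op Y, \op Z) \;\cong\; \seq{\G}\big(\op X, \map_{\G}(\op Y, \gamma_{\bullet}^{\G}(\op Z))\big).$$
Naturality in $\op Y$ will fall out once naturality is verified in all three variables simultaneously.

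First I would unpack the left-hand side. A morphism $\phi: \op X \square \op Y \to \op Z$ consists of $G_n$-equivariant maps $\phi_{n}: (\op X \square \op Y)(n) \to \op Z(n)$ for each $n$, and by the defining formula for the matrix product together with the universal property of the balanced product $-\times_{G_l \times G_m} G_n$, such data is equivalent to a family of $G_l \times G_m$-equivariant simplicial maps
$$\phi_{l,m}: \op X(l) \times \op Y(m) \to \nu_{l,m}^{\ast} \op Z(lm)$$
indexed by pairs $(l,m) \in \N^{2}$, with no further coherence.

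Next, I would curry in the $G_m$-direction. Since $G_m$ acts trivially on $\op X(l)$ and $G_l$ acts trivially on $\op Y(m)$, the cartesian-closed adjunction in the category $\S_{G_m}$ of simplicial $G_m$-sets converts $\phi_{l,m}$ bijectively into a $G_l$-equivariant simplicial map
$$\widetilde\phi_{l,m}: \op X(l) \to \operatorname{map}_{G_m}\big(\op Y(m), \nu_{l,m}^{\ast}\op Z(lm)\big),$$
the $G_l$-action on the target being induced by the residual $G_l$-action on $\op Z(lm)$ via $\nu_{l,m}(-, e_m)$.

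To reassemble these into a morphism of $\G$-sequences, I would identify $\nu_{l,m}^{\ast}\op Z(lm)$, regarded simultaneously as a $G_l$- and a $G_m$-simplicial set, with the simplicial set $\gamma_{l}^{\G}(\op Z)(m)$ equipped with its $G_m$-action from the $\G$-sequence structure and its $G_l$-action from Remark \ref{rmk:action}. Granting this identification, for each fixed $l$ the family $\widetilde\phi_{l,m}$ (varying $m$) constitutes a $G_l$-equivariant simplicial map
$$\op X(l) \to \prod_{m}\operatorname{map}_{G_m}\big(\op Y(m), \gamma_{l}^{\G}(\op Z)(m)\big) = \map_{\G}\big(\op Y, \gamma_{l}^{\G}(\op Z)\big),$$
and these assemble across $l$ into a morphism of $\G$-sequences $\op X \to \map_{\G}(\op Y, \gamma_{\bullet}^{\G}(\op Z))$. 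Reversing each step produces the inverse bijection, and naturality in $\op X$, $\op Y$, $\op Z$ is automatic because every operation used (coend, currying, pullback of actions) is functorial.

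I expect the main obstacle to be careful bookkeeping of the several interacting group actions through the currying and pullback steps. In particular, one must check that the $G_l$-action on $\gamma_{l}^{\G}(\op Z)(m) = \nu_{m,l}^{\ast}\op Z(ml)$ supplied by Remark \ref{rmk:action} agrees with the $G_l$-action on $\nu_{l,m}^{\ast}\op Z(lm)$ needed to make the assembled map land in $\seq{\G}$. This is the point at which the monoidal coherence of $\nu$ is invoked, and it is a useful sanity check that the construction could equally well be extracted from Day's general closedness theorem applied to the convolution structure on $\seq{\G}$, with the computation above serving to identify the abstract internal hom with the explicit formula $\map_{\G}(\op Y, \gamma_{\bullet}^{\G}(-))$.
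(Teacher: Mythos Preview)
Your proposal is correct and follows essentially the same approach as the paper: the paper's proof is precisely the chain of natural isomorphisms you describe (induction/restriction for the balanced product $-\times_{G_l\times G_m}G_n$, then currying in the $G_m$-variable, then reassembling over $l$), written out as a display rather than in prose. Your treatment is in fact slightly more careful than the paper's, since you explicitly flag the $\nu_{l,m}$ versus $\nu_{m,l}$ bookkeeping that the paper passes over silently in its final identification.
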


\begin{proof} The proposition is a consequence of the following sequence of natural isomorphisms.  If $\op X$, $\op Y$, and $\op Z$ are $\G$-sequences, then
\begin{align*}
\seq{\G}(\op X\square \op Y, \op Z) &= \prod_{n\geq 0}\S_{G_{n}}\big(\op X\square\op Y(n), , \op Z(n)\big)\\
&=\prod_{n\geq 0}\S _{G_{n}}\big (\coprod_{lm=n}\op X(l)\times Y(m)\times_{G_{l}\times G_{m}}G_{n}, \op Z(n)\big)\\
&\cong\prod_{n\geq 0}\prod_{lm=n}\S_{G_{l}\times G_{m}}\big(\op X(l)\times\op Y(m),  \nu_{l,m}^{*}\op Z(n)\big)\\
&\cong\prod_{l\geq 0}\S_{G_{l}}\bigg(\op X(l), \prod _{m\geq 0} \operatorname{map}_{G_{m}}\big(\op Y(m), \nu_{l,m}^{*}\op Z(ml)\big)\bigg)\\
&=\seq{\G}\big(\op X, \map_{\G}(\op Y, \gamma^{\G}_{\bullet}(\op Z)\big).
\end{align*}
\end{proof}

\subsection{The Boardman-Vogt tensor product of bimodules}\label{sec:bvt}

Having defined and studied the matrix monoidal structure on symmetric sequences, we are ready to lift the Boardman-Vogt tensor product of operads to bimodules over operads.

\begin{notn} For operads $\op P$ and $\op Q$ with multiplication maps $\mu$ and $\nu$, let 
$$F_{\op P, \op Q}: \sseq\to \Bimod{\op P} {\op Q}: \op X \mapsto (\op P\circ \op X\circ \op Q, \mu\circ\id_{\op X\circ \op Q}, \id_{\op P\circ \op X}\circ \nu)$$ 
denote the free $(\op P,\op Q)$-bimodule functor. 
\end{notn}

\begin{thm}\label{thm:tot} Let $\op P$, $\op P'$, $\op Q$ and $\op Q'$ be symmetric operads.  There exists a functor
$$-\tot-: \bimod{\op P,\op Q}\times \bimod {\op P',\op Q'} \to \bimod {\op P\otimes \op P', \op Q\otimes \op Q' },$$
natural in $\op P$, $\op P'$, $\op Q$ and $\op Q'$, such that 
$$F_{\op P, \op Q}(\op X)\tot F_{\op P', \op Q'}(\op X') = F_{\op P\otimes \op P', \op Q\otimes \op Q'}(\op X \square \op X')$$
for all $\op X, \op Y\in \sseq$. 
\end{thm}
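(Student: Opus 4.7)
The plan is to mimic the strategy used for simplicial bisets in Section 1.2, but with the matrix monoidal structure $\square$ from Section \ref{sec:mms} playing the role that the cartesian product of simplicial sets played there, and with the Boardman--Vogt tensor product $\otimes$ of operads replacing the componentwise product of monoids. The identity $F_{\op P,\op Q}(\op X)\tot F_{\op P',\op Q'}(\op X') = F_{\op P\otimes \op P', \op Q\otimes \op Q'}(\op X\square \op X')$ required by the theorem forces the definition on objects that are free bimodules; the theorem's real content lies in extending this to morphisms and then to arbitrary bimodules in a functorial way, and the key technical ingredient is the intertwining natural transformation supplied by Proposition \ref{prop:sigma} and Theorem \ref{thm:tau}.

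First I would define $\tot$ on morphisms between free bimodules. Using the free/forgetful adjunction, a bimodule morphism $a\colon F_{\op P,\op Q}(\op X)\to F_{\op P,\op Q}(\op Y)$ is uniquely determined by its restriction $a^{\sharp}\colon \op X \to \op P\circ \op Y\circ \op Q$ to the generators, and similarly for a morphism $a'$ on the primed side. Proposition \ref{prop:sigma} and Theorem \ref{thm:tau} provide natural transformations of symmetric sequences of the form
$$(\op P\circ \op Y\circ \op Q)\square(\op P'\circ \op Y'\circ \op Q')\longrightarrow (\op P\otimes \op P')\circ (\op Y\square \op Y')\circ (\op Q\otimes \op Q')$$
that are compatible with the multiplication on the Boardman--Vogt operads. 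Composing with $a^{\sharp}\square (a')^{\sharp}$ and then taking the adjoint under the free/forgetful adjunction produces a bimodule map
$$F_{\op P\otimes \op P',\op Q\otimes \op Q'}(\op X\square \op X')\longrightarrow F_{\op P\otimes \op P',\op Q\otimes \op Q'}(\op Y\square \op Y'),$$
which I would define to be $a\tot a'$. Functoriality and bifunctoriality in $(a,a')$ then reduce to the naturality of the intertwiner and to the compatibility with multiplication asserted in Theorem \ref{thm:tau}.

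Next I would extend $\tot$ to arbitrary bimodules using that every $(\op P,\op Q)$-bimodule $\op M$ admits the canonical reflexive coequalizer presentation
$$F_{\op P,\op Q}\bigl(\op P\circ \op M\circ \op Q\bigr)\rightrightarrows F_{\op P,\op Q}(\op M)\twoheadrightarrow \op M$$
in $\bimod{\op P,\op Q}$, with one arrow coming from the left and right actions and the other from the unit. Defining $\op M\tot \op M'$ as the coequalizer in $\bimod{\op P\otimes\op P',\op Q\otimes \op Q'}$ of the two parallel maps obtained by applying $\tot$ to the free presentations then yields a functor, and independence of presentation follows from the standard universal argument once one knows that $\tot$ preserves reflexive coequalizers in each variable when restricted to the subcategory of free bimodules. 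Naturality in $\op P,\op P',\op Q,\op Q'$ follows from naturality of the intertwining transformation and of the free bimodule functors: an operad map $\vp\colon \op P\to \op P'$ induces a restriction-of-coefficients functor that commutes with $F_{-,-}$ and $\square$ by direct inspection.

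The main obstacle is verifying that the map on free bimodules produced by the intertwining transformation is genuinely a well-defined bimodule morphism, which amounts to showing that the composite respects the equivalence relation defining the Boardman--Vogt tensor products $\op P\otimes \op P'$ and $\op Q\otimes \op Q'$, including the block-permutation identity involving $\tau_{k,l}$ from Remark \ref{rmk:switch}. All of this combinatorial bookkeeping --- compatibility with the symmetric group actions, with the composition product $\circ$, and with the operad multiplications --- is exactly what is packaged into Proposition \ref{prop:sigma} and Theorem \ref{thm:tau}, so once those are in hand the proof of Theorem \ref{thm:tot} is essentially a matter of applying them and invoking the standard coequalizer extension argument.
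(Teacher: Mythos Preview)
Your proposal is correct and follows essentially the same route as the paper: define $\xi(a,a')=\bigl(\upsilon\,(a^{\sharp}\square (a')^{\sharp})\bigr)^{\flat}$ on morphisms of free bimodules via Theorem~\ref{thm:tau}, use the compatibility diagram~(\ref{eqn:tau}) to get functoriality (this is the content of Propositions~\ref{prop:tot} and~\ref{prop:composition}), and then extend to arbitrary bimodules by the canonical free presentation. One small slip: in the reflexive coequalizer $F_{\op P,\op Q}(\op P\circ \op M\circ \op Q)\rightrightarrows F_{\op P,\op Q}(\op M)$ the two parallel arrows come from the $(\op P,\op Q)$-action on $\op M$ and from the operad multiplications $\mu_{\op P},\mu_{\op Q}$, while the unit supplies the reflexive section; and the paper carries this out as the colimit of the double diagram~(\ref{eqn:double-coeq}) so as to treat both variables symmetrically and make the verification that $F_{\op P,\op Q}(\op X)\tot F_{\op P',\op Q'}(\op X')\cong F_{\op P\otimes\op P',\op Q\otimes\op Q'}(\op X\square\op X')$ transparent.
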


\begin{rmk} By restricting to the special cases where either $\op P'=\op Q'=\op J$ or $\op P=\op Q=\op J$, where $\op J$ is the unit symmetric sequence, we obtain as a consequence of Theorem \ref{thm:tot} that the Boardman-Vogt tensor product lifts to both left and right modules over operads.
\end{rmk}

Proving Theorem \ref{thm:tot} requires a good understanding of the relationship between the composition and matrix monoidal structures on $\seq{\mathfrak S}$.  To this end, we establish a sequence of propositions describing this relationship, then apply these propositions to proving Theorem \ref{thm:tot}.

Inspired by our analysis of the case of sequences concentrated in arity 1, we begin by analyzing the case of free bimodules.

 \begin{notn} Let 
$$\Phi_{\op P, \op Q}:(\sseq)^{op}\times \sseq \to \cat {Set}$$
denote the functor defined on objects by 
$$\Phi_{\op P, \op Q}(\op X, \op Y)= \Bimod{\op P}{\op Q} (F_{\op P,\op Q}\op X, F_{\op P,\op Q}\op Y).$$
\end{notn}

As stated precisely below, $\Phi_{(-,-)}$ is a family of functors ``monoidally parametrized'' by $(\cat {Op},\otimes)$, which is the key to our definition of the tensor product of bimodule morphisms.

\begin{prop}\label{prop:tot} For all operads $\op P$, $\op P'$, $\op Q$ and $\op Q'$, the diagram of functors
$$\xymatrix{\big((\sseq)^{op}\times \sseq\big)\times \big((\sseq)^{op}\times \sseq\big)\ar[d]_{(23)} \ar[rr]^(0.70){\Phi_{\op P, \op Q}\times \Phi_{\op P',\op Q'}}&&\cat {Set}\times \cat {Set}\ar [dd]^{-\times-}\\
\big((\sseq)^{op}\times (\sseq)^{op}\big)\times \big(\sseq\times \sseq\big)\ar[d]_{(-\square-)\times (-\square-)}\\
(\sseq)^{op}\times \sseq\ar [rr]^{\Phi_{\op P\otimes\op P', \op Q\otimes \op Q'}}&&\cat {Set}}$$
commutes up to a natural transformation 
$$\xi: (-\times -)\circ (\Phi_{\op P,\op Q}\times \Phi_{\op P',\op Q'}) \Longrightarrow \Phi_{\op P\otimes\op P', \op Q\otimes \op Q'}\circ\big( (-\square-)\times (-\square-)\big)\circ (23).$$
\end{prop}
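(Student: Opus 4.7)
The plan is to exploit the freeness of $F_{\op P, \op Q}$ together with the intertwining natural transformation of Proposition \ref{prop:sigma} and Theorem \ref{thm:tau}. Since $F_{\op P, \op Q}$ is left adjoint to the underlying-sequence functor, there is a natural bijection
$$\Phi_{\op P,\op Q}(\op X, \op Y)\cong \sseq\big(\op X,\; \op P\circ \op Y\circ \op Q\big),$$
under which a bimodule morphism $a$ corresponds to its adjunct $a^{\sharp}:\op X\to \op P\circ \op Y\circ \op Q$, and analogously for $\Phi_{\op P', \op Q'}$ and $\Phi_{\op P\otimes \op P', \op Q\otimes \op Q'}$. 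This is exactly the translation used in the warm-up in Section 1.2.

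Given $a\in\Phi_{\op P,\op Q}(\op X,\op Y)$ and $a'\in\Phi_{\op P',\op Q'}(\op X',\op Y')$, I first form the matrix product
$$a^{\sharp}\square (a')^{\sharp}:\; \op X\square\op X'\longrightarrow (\op P\circ \op Y\circ \op Q)\square (\op P'\circ \op Y'\circ \op Q'),$$
which exists by bifunctoriality of $-\square-$. I then postcompose with the intertwiner supplied by Proposition \ref{prop:sigma} and Theorem \ref{thm:tau}, a natural map
$$(\op P\circ \op Y\circ \op Q)\square (\op P'\circ \op Y'\circ \op Q')\longrightarrow (\op P\otimes \op P')\circ (\op Y\square \op Y')\circ (\op Q\otimes \op Q')$$
that is compatible with the operadic multiplications on $\op P\otimes \op P'$ and $\op Q\otimes \op Q'$. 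The resulting composite is a sequence morphism from $\op X\square\op X'$ into $F_{\op P\otimes\op P', \op Q\otimes\op Q'}(\op Y\square\op Y')$, and I define $\xi_{(\op X,\op X'),(\op Y,\op Y')}(a,a')$ to be its adjunct bimodule morphism. Specializing to $\op P=\op Q=\op J$ and the identity morphisms recovers the ``generators-level'' recipe from Section 1.2 that was used to motivate $\upsilon_{X,X'}$.

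To verify that $\xi$ assembles into a natural transformation of bifunctors, it suffices, by the adjunction, to check naturality of the construction $(a,a')\mapsto \text{intertwiner}\circ (a^{\sharp}\square (a')^{\sharp})$ in each of the four variables $\op X, \op X', \op Y, \op Y'$. Naturality in the contravariant slots $\op X, \op X'$ follows from bifunctoriality of $-\square-$ and the behavior of the free-forgetful adjunction under precomposition; naturality in $\op Y, \op Y'$ is immediate from the naturality of the intertwiner in those arguments. The main obstacle does not lie in this proposition itself but in the appendix, where one must construct the intertwiner with the correct $\mathfrak S$-equivariance and compatibility with the Boardman-Vogt multiplication; granting Proposition \ref{prop:sigma} and Theorem \ref{thm:tau}, the present statement is a formal consequence.
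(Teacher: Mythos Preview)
Your proposal is correct and follows essentially the same approach as the paper: pass to adjuncts $a^{\sharp}$ and $(a')^{\sharp}$ via the free--forgetful adjunction, apply $-\square-$, postcompose with the intertwiner $\upsilon$ of Theorem~\ref{thm:tau}, and take the adjunct back to define $\xi(a,a')$. The paper simply asserts naturality, whereas you spell out why it holds in each variable, but the construction is the same.
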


\bigskip

\begin{rmk} The existence of the natural map
$$\xi: \Phi_{\op P,\op Q}(\op X, \op Y)\times \Phi_{\op P', \op Q'}(\op X', \op Y')\to  \Phi_{\op P\otimes \op P', \op Q \otimes \op Q'}(\op X\square \op X', \op Y\square \op Y')$$ 
implies that for every pair of bimodule morphisms $a:F_{\op P,\op Q}\op X \to F_{\op P,\op Q}\op Y$ and $b: F_{\op P',\op Q'}\op X'\to F_{\op P',\op Q'}\op Y'$, i.e., elements of $\Phi_{\op P,\op Q}(\op X, \op Y)$ and $\Phi_{\op P',\op Q'}(\op X', \op Y')$, respectively, there is a bimodule morphism
$$ \xi(a,b): F_{\op P\otimes \op P', \op Q \otimes \op Q'}(\op X\square \op X')\to F_{\op P\otimes \op P', \op Q \otimes \op Q'}(\op Y\square \op Y').$$
\end{rmk}
\medskip

It will follow from the construction of $\xi$ that it preserves composition in both variables, as formulated precisely below.
\medskip

\begin{prop}\label{prop:composition} For all bimodule morphisms $a:F_{\op P,\op Q}\op X \to F_{\op P, \op Q}\op Y$, $b:F_{\op P, \op Q}\op Y\to F_{\op P, \op Q}\op Z$, $a':F_{\op P',\op Q'}\op X' \to F_{\op P', \op Q'}\op Y'$, and $b':F_{\op P', \op Q'}\op Y'\to F_{\op P', \op Q'}\op Z'$,
$$\xi(b, b')\xi(a, a')=\xi(ba, b'a'): F_{\op P\otimes \op P', \op Q\otimes \op Q'}(\op X \square \op X') \to F_{\op P\otimes \op P', \op Q\otimes \op Q'}(\op Z \square \op Z').$$
\end{prop}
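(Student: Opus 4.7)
The plan is to reduce the identity of bimodule morphisms to an identity of their restrictions to the generating symmetric sequence $\op X \square \op X'$, and then perform a diagram chase driven by the naturality properties of the intertwining natural transformation that underlies $\xi$. By the freeness of $F_{\op P\otimes \op P',\op Q\otimes \op Q'}$, any bimodule morphism out of $F_{\op P\otimes \op P',\op Q\otimes \op Q'}(\op X\square \op X')$ is determined by its restriction to $\op X\square \op X'$, so it suffices to verify that $(\xi(b,b')\xi(a,a'))^{\sharp}$ and $\xi(ba,b'a')^{\sharp}$ agree as maps of symmetric sequences into $(\op P\otimes \op P')\circ(\op Z\square \op Z')\circ(\op Q\otimes \op Q')$.

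Guided by the arity-$1$ case worked out in the introduction, the forthcoming construction of $\xi$ will exhibit $\xi(a,a')^{\sharp}$ as the composite
$$\op X\square \op X' \xrightarrow{a^{\sharp}\square (a')^{\sharp}} (\op P\circ \op Y\circ \op Q)\square(\op P'\circ \op Y'\circ \op Q') \xrightarrow{\tau} (\op P\otimes \op P')\circ(\op Y\square \op Y')\circ(\op Q\otimes \op Q'),$$
where $\tau$ denotes the intertwining natural transformation provided by Proposition~\ref{prop:sigma} and Theorem~\ref{thm:tau}. At the same time, the restriction to generators of a composite $ba$ of morphisms between free $(\op P,\op Q)$-bimodules is obtained by first applying $a^{\sharp}$, then $\id\circ b^{\sharp}\circ \id$, and finally the multiplication $\mu_{\op P}\circ \id\circ \mu_{\op Q}$; an analogous formula holds on the tensor side with $\mu_{\op P\otimes \op P'}$ and $\mu_{\op Q\otimes \op Q'}$ in place of $\mu_{\op P}$ and $\mu_{\op Q}$.

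With these formulas in hand, the two sides of the desired equality unfold into explicit composites built from $a^{\sharp},(a')^{\sharp},b^{\sharp},(b')^{\sharp}$, several copies of $\tau$, and the operadic multiplications. The right-hand side $\xi(ba,b'a')^{\sharp}$ invokes $\tau$ exactly once, applied to $(ba)^{\sharp}\square(b'a')^{\sharp}$; the left-hand side invokes $\tau$ twice and sandwiches the multiplications $\mu_{\op P\otimes \op P'}$ and $\mu_{\op Q\otimes \op Q'}$ in between. Reconciling them will rest on two properties of $\tau$: first, its naturality in both arguments, which allows $\square$-products of maps to be commuted past $\tau$; and second, its compatibility with the multiplications on $\op P\otimes \op P'$ and $\op Q\otimes \op Q'$, which is precisely the content of Proposition~\ref{prop:sigma} and Theorem~\ref{thm:tau}. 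The main obstacle will be organizational rather than conceptual: five symbolic factors have to be tracked through two applications of $\tau$ and several multiplication maps while the two monoidal products $\square$ and $\circ$ are interchanged, so careful bookkeeping is essential. Once the naturality and multiplicativity properties of $\tau$ are in place, however, the chase closes without any new input.
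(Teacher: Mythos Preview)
Your proposal is correct and follows essentially the same approach as the paper: reduce to an equality of restrictions to generators, unwind both sides into composites built from $a^{\sharp},(a')^{\sharp},b^{\sharp},(b')^{\sharp}$, the intertwining transformation (the paper calls it $\upsilon$), and the operad multiplications, then close the diagram using naturality of $\upsilon$ together with its compatibility with multiplication from Theorem~\ref{thm:tau}. The only difference is cosmetic---you write $\tau$ for what the paper calls $\upsilon$, and you leave the explicit diagrams implicit---but the logical structure is the same.
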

\medskip

An important tool in the construction of the natural transformation $\xi$ is  the following transformation that intertwines $\square$ and $\circ$, generalizing the transformation (\ref{eqn:transf}).

\begin{prop}\label{prop:sigma} For all $\op V,\op W, \op Y, \op Z\in \seq{\mathfrak S}$, there is a natural morphism of $\mathfrak S$-sequences
$$\sigma: (\op V\circ \op W)\square(\op Y\circ \op Z) \to (\op V \square \op Y) \circ (\op W \square \op Z)$$
such that 
$$\xymatrix{(\op U\circ\op V\circ \op W)\square(\op X\circ\op Y\circ \op Z)\ar [r]^{\sigma}\ar [d]_{\sigma}&(\op U\square \op X)\circ\big((\op V\circ \op W)\square(\op Y\circ \op Z)\big)\ar [d]^{\id\circ \sigma}\\ 
\big((\op U\circ\op V)\square(\op X\circ\op Y)\big)\circ (\op W \square \op Z)\ar [r]^{\sigma\circ \id}&(\op U\square \op X)\circ (\op V \square \op Y) \circ (\op W \square \op Z)}$$
commutes.
\end{prop}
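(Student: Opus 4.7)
The plan is to construct $\sigma$ explicitly on representatives and verify the coherence by combinatorial checks on permutations.

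First, on a representative of the form
$$\bigl((v; w_1, \ldots, w_k; \id)\; \square\; (y; z_1, \ldots, z_l; \id)\bigr) \cdot \theta$$
in $\bigl((\op V \circ \op W) \square (\op Y \circ \op Z)\bigr)(nm)$, with $v \in \op V(k)$, $w_i \in \op W(n_i)$, $y \in \op Y(l)$, $z_j \in \op Z(m_j)$, $n = \sum_i n_i$, $m = \sum_j m_j$, and $\theta \in \mathfrak S_{nm}$, I would set
$$\sigma\bigl(\cdots\bigr) := \bigl(v \square y;\; w_1 \square z_1,\, w_1 \square z_2,\, \ldots,\, w_k \square z_l;\; \alpha_{\vec n, \vec m}\bigr) \cdot \theta,$$
where the entries are listed in lexicographic $(i,j)$-order and $\alpha_{\vec n, \vec m} \in \mathfrak S_{nm}$ is the shuffle identifying the product ordering on $\{1,\ldots,n\} \times \{1,\ldots,m\}$ with the concatenation of the blocks $\{1,\ldots,n_i\} \times \{1,\ldots,m_j\}$ (in lex order). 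By construction $\sigma$ is $\mathfrak S_{nm}$-equivariant.

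Next I would verify that $\sigma$ descends to equivalence classes. Three families of relations need to be checked: (a) the $\mathfrak S_k \times \mathfrak S_l$-invariance implicit in the definition of $\square$; (b) the $\mathfrak S_k$- and $\mathfrak S_l$-actions that permute the $w_i$ and $z_j$ in the two composition products; and (c) the block-sum relation $(v; w_1, \ldots, w_k; \tau_1 \oplus \cdots \oplus \tau_k) \sim (v; w_1 \cdot \tau_1^{-1}, \ldots, w_k \cdot \tau_k^{-1}; \id)$ and its $y$-side analogue. In each case the verification reduces to a compatibility between the groupoid product $\nu$ and the transposition permutations $\tau_{k,l}$, which follows from viewing $\mathfrak S_{kl}$ as acting on $\{1,\ldots,k\} \times \{1,\ldots,l\}$ as in Examples \ref{exs}: both $\nu$ and $\tau$ are built from this grid and interact predictably under block refinement.

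For the coherence square, I would expand both composite maps on a representative of
$$\bigl((\op U \circ \op V \circ \op W) \square (\op X \circ \op Y \circ \op Z)\bigr)(N)$$
coming from elements $u \in \op U(j)$, $v_i \in \op V(k_i)$, $w_{ih} \in \op W$, $x \in \op X(j')$, $y_{i'} \in \op Y(k'_{i'})$, $z_{i'h'} \in \op Z$ all strung together with identity permutations. Both routes produce an element of the form
$$\bigl(u \square x;\; v_i \square y_{i'};\; w_{ih} \square z_{i'h'};\; \beta\bigr),$$
with the inner tuples indexed in lex order; the only content of the diagram is the equality of the permutations $\beta$ arising from the two sides. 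This identity reduces to associativity of $\nu$ together with the standard fact that the shuffle $\alpha_{\vec n, \vec m}$ is compatible with refining either index vector into a block sum.

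The main obstacle will be the bookkeeping of the shuffles $\alpha_{\vec n, \vec m}$ under varying arity vectors: the output set is canonically the same on both sides of $\sigma$, but the linear orderings used to encode representatives differ, and one must track how every generating relation transforms under the change of ordering. The coordinate-free formalism of $\mathfrak S$-sequences advertised in the introduction is presumably designed precisely to replace these index manipulations with universal-property arguments, after which the checks become essentially automatic.
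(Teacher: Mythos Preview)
Your outline is correct in spirit and would lead to a valid proof, but it proceeds along a different line from the paper's argument.

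The paper's proof is entirely coordinate-free. It replaces $\sseq$ by the equivalent category $[\A^{op},\S]_{mult}$ of multiplicative functors on the groupoid $\A$ of finite set maps, under which the composition and matrix products become left Kan extensions along composition and fibred-product functors, respectively (Lemmas~\ref{lem:monprods} and~\ref{lem:comp-prod}). The source and target of $\sigma$ are then identified with left Kan extensions along two functors $\vp\omega$ and $\vp$ out of certain diagram categories, linked by a functor $\omega:\cat{Cospan}(\cat{Comp})\to\cat{Comp}(\cat{Cospan})$. The transformation $\sigma$ is induced by a natural transformation $\mathfrak t$ built from iterated \emph{diagonal} maps $\delta_\Phi:\Phi(g)\to\Phi(g\times_R S')$, and the coherence square reduces to the coassociativity of these diagonals---a one-line check.

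Your approach is the direct combinatorial one: name the shuffle $\alpha_{\vec n,\vec m}$, verify well-definedness against each generating relation, and compare the two permutations $\beta$ arising in the coherence diagram. This is equivalent to unwinding the paper's $\delta_\Phi$ on objects of the form $\{1,\dots,n\}\to\{1\}$; your ``duplication'' of each $w_i$ across all $z_j$ is exactly what $\delta_\Phi$ does. What you gain is concreteness and no need to set up the $\A^{op}$-machinery; what you lose is that every equivariance and coherence check becomes a grid-reindexing computation, whereas in the paper's framework the symmetric-group actions are absorbed into the morphisms of $\A$ and never appear explicitly. You diagnosed this trade-off accurately in your final paragraph.

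One small point to watch if you carry out your version in full: the well-definedness check~(a)---invariance under the $\mathfrak S_k\times\mathfrak S_l$-action on the source of $\square$---interacts with the ordering of the $kl$ slots of $v\square y$, so you must verify that permuting $v$ by $\sigma\in\mathfrak S_k$ and simultaneously relabelling the $w_i$'s produces, after applying your formula, an element equivalent in $(\op V\square\op Y)\circ(\op W\square\op Z)$ via the relation $(v\square y)\cdot\nu_{k,l}(\sigma,e)\sim$ reordering the $kl$ inputs. This is the one place where the interaction of $\nu$ with block permutations must be made fully explicit.
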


\begin{rmk} It is probably true that $(\seq{\mathfrak S}, \square, \op J, \circ, \op J)$ is a \emph{2-monoidal category} in the sense of Aguiar and Mahajan \cite {aguiar-mahajan}, but as we do not need the full power of this structure in this article, we leave the proof to the interested reader.
\end{rmk}

From the natural transformation $\sigma$ of Proposition \ref{prop:sigma} we can build another natural transformation, for operads this time, which is compatible with the multiplication on the Boardman-Vogt tensor product in a crucial way. 

\begin{thm}\label{thm:tau} Let $\op P$, $\op P'$, $\op Q$ and $\op Q'$ be operads, and let $\op X$ and $\op X'$ be symmetric sequences.  There is a morphism of symmetric sequences
$$\upsilon: (\op P\circ \op X\circ \op Q) \square (\op P'\circ \op X'\circ \op Q') \to (\op P \otimes \op P') \circ (\op X \square \op X') \circ (\op Q\otimes \op Q')$$
that is natural in all variables.  Moreover,  $\upsilon$ is compatible with operad multiplication, in the sense that
{\small{\begin{equation}\label{eqn:tau}\xymatrix{(\op P^{\circ 2}\circ \op X\circ \op Q^{\circ 2})\square \big((\op P')^{\circ 2}\circ \op X'\circ (\op Q')^{\circ 2}\big)\ar[r]^(0.45){\upsilon}\ar [dd]_{(\mu\circ \op X\circ \mu)\square (\mu\circ \op X'\circ \mu)}&(\op P\otimes \op P')\circ \big((\op P\circ \op X\circ \op Q)\square (\op P'\circ \op X'\circ \op Q')\big)\circ (\op Q\otimes \op Q')\ar [d]^{\id_{\op P\otimes \op P'}\circ \upsilon\circ\id_{\op Q\otimes \op Q'} }\\
&(\op P\otimes \op P')^{\circ 2}\circ (\op X\square \op X') \circ (\op Q\otimes \op Q')^{\circ 2}\ar[d]^{\mu\circ \id _{\op X\square \op X'}\circ \mu}\\
(\op P\circ \op X\circ \op Q)\square (\op P'\circ \op X'\circ \op Q')\ar [r]_{\upsilon}&(\op P\otimes \op P')\circ (\op X\square \op X') \circ (\op Q\otimes \op Q')
}
\end{equation}}}

\noindent always commutes.  Here all operad multiplication maps are denoted $\mu$.
\end{thm}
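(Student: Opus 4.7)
The plan is to build $\upsilon$ by composing two applications of the natural transformation $\sigma$ from Proposition~\ref{prop:sigma} with canonical comparison maps $\eta_{\op P,\op P'}: \op P\square \op P'\to \op P\otimes \op P'$ defined on representatives by $(p, p';\tau)\mapsto (p\otimes p')\cdot \tau$, together with the analogous map for $\op Q, \op Q'$. Well-definedness of $\eta$ on the quotient $(\op P(k)\times \op P'(k'))\times _{\mathfrak S_k\times \mathfrak S_{k'}} \mathfrak S_{kk'}$ follows from the identity $(p\cdot a)\otimes (p'\cdot b)=(p\otimes p')\cdot \nu_{k,k'}(a,b)$ valid in $\op P\otimes \op P'$.

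Concretely, $\upsilon$ is the composite
\begin{align*}
(\op P\circ \op X\circ \op Q)\square (\op P'\circ \op X'\circ \op Q')
&\xrightarrow{\sigma} (\op P\square \op P')\circ \bigl((\op X\circ \op Q)\square (\op X'\circ \op Q')\bigr)\\
&\xrightarrow{\id\circ \sigma} (\op P\square \op P')\circ (\op X\square \op X')\circ (\op Q\square \op Q')\\
&\xrightarrow{\eta\circ \id\circ \eta} (\op P\otimes \op P')\circ (\op X\square \op X')\circ (\op Q\otimes \op Q').
\end{align*}
The coherence square of Proposition~\ref{prop:sigma} guarantees that grouping the triple composition in the other direction produces the same morphism, so $\upsilon$ is unambiguously defined. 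Naturality of $\upsilon$ in all six variables is immediate from the naturality of $\sigma$ in its four inputs and of $\eta$ in its two operad inputs.

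For diagram~\eqref{eqn:tau}, I would unfold both routes around the rectangle as composites of $\sigma$'s, $\eta$'s, and $\mu$'s. Using naturality of $\sigma$ to slide the outer multiplications $\mu: \op P^{\circ 2}\to \op P$ and $\mu: \op Q^{\circ 2}\to \op Q$ past the $\sigma$-decomposition, the commutativity of \eqref{eqn:tau} reduces to the single \emph{multiplicativity identity}
$$\mu_{\op P\otimes \op P'}\circ (\eta\circ \eta)\circ \sigma \;=\; \eta\circ (\mu\square \mu)\;:\; \op P^{\circ 2}\square (\op P')^{\circ 2}\longrightarrow \op P\otimes \op P',$$
together with its $\op Q$-analogue. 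Evaluating both sides on a representative of the form $\bigl((p;p_1,\ldots,p_k;\id),(p';p'_1,\ldots,p'_{k'};\id);\tau\bigr)$, both collapse to
$$\bigl(p(p_1,\ldots,p_k)\otimes p'(p'_1,\ldots,p'_{k'})\bigr)\cdot \tau,$$
modulo applications of the interchange relation of Remark~\ref{rmk:switch} used to reorganize the block permutation emerging from $\sigma$.

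The principal obstacle is the verification of this multiplicativity identity: conceptually it is just the Boardman-Vogt interchange in disguise, but making that precise requires a careful combinatorial comparison of the block permutation produced by $\sigma$ with the iterated occurrences of $\tau_{k,l}$ forced by the defining relation of $\op P\otimes \op P'$. Once this identity is established for both $\op P\square \op P'\to \op P\otimes \op P'$ and $\op Q\square \op Q'\to \op Q\otimes \op Q'$, the commutativity of \eqref{eqn:tau} is a purely formal consequence obtained by a routine diagram chase assembled from the multiplicativity of $\eta$, the naturality of $\sigma$, and the coherence square of Proposition~\ref{prop:sigma}. As with Proposition~\ref{prop:sigma}, the bookkeeping is long enough that it is natural to relegate the full detail to the appendix.
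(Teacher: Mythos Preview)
Your proposal matches the paper's proof in overall architecture: the paper constructs $\upsilon$ as exactly your composite (writing $\pi$ for your $\eta$), and then reduces the commutativity of diagram~\eqref{eqn:tau} to precisely your ``multiplicativity identity'' $\mu_{\op P\otimes \op P'}(\pi\circ\pi)\sigma = \pi(\mu_{\op P}\square\mu_{\op P'})$, invoking the associativity square of Proposition~\ref{prop:sigma} to get there. So your strategic outline is correct and essentially identical to the paper's.

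The divergence is in how the multiplicativity identity itself is verified. You propose to evaluate both sides on representatives and match them up using the interchange relation, while acknowledging that the permutation bookkeeping is the real obstacle. The paper takes a different route for exactly this reason: it recasts symmetric sequences as multiplicative functors on a groupoid $\cat A$ of finite set maps, so that $\sigma$, $\iota$, and the relevant auxiliary maps become natural transformations induced by functors between diagram categories (pullback, composition, etc.). The identity then becomes the equality of two composites of such natural transformations, which is checked via a commuting diagram of functors rather than by tracking block permutations by hand. Concretely, the paper factors $\pi(\mu\square\mu)$ through an intermediate map $\widetilde\sigma$ and reduces everything to the purely combinatorial equation $(\iota\circ\iota)\sigma = (\id\circ\iota\circ\id)(\id\circ sw\circ\id)\widetilde\sigma$, which is then established coordinate-free. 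Your direct approach would work, but the paper's method is what actually discharges the ``careful combinatorial comparison'' you flag as the principal obstacle; it trades explicit permutations for a diagram chase among Kan extensions.
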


In order not to interrupt the flow of the argument here, we refer the reader to the appendix for the proofs of Proposition \ref{prop:sigma} and Theorem \ref{thm:tau} and continue now with their application to proving Propositions \ref{prop:tot} and \ref{prop:composition} and finally Theorem \ref{thm:tot}.

\begin{proof}[Proof of Proposition \ref{prop:tot}] Let $a\in \Phi_{\op P,\op Q}(\op X, \op Y)$ and $b\in \Phi_{\op P', \op Q'}(\op X', \op Y')$, and let
$$a^{\sharp}: \op X \to \op P\circ \op Y\circ \op Q\quad \text{and} \quad b^{\sharp}: \op X' \to \op P'\circ \op Y'\circ \op Q'$$
be the morphisms of symmetric sequences corresponding to $a$ and $b$ under the free bimodule/forgetful-adjunction. The composite morphism of symmetric sequences
$$\op X \square \op X' \xrightarrow {a^{\sharp}\square b^{\sharp}} (\op P\circ \op Y\circ \op Q) \square (\op P'\circ \op Y'\circ \op Q)' \xrightarrow\upsilon (\op P \otimes \op P') \circ (\op Y \square \op Y') \circ (\op Q\otimes \op Q')$$
corresponds under the free bimodule/forgetful-adjunction to a morphism 
$$\big(\upsilon(a^{\sharp}\square b^{\sharp})\big)^{\flat}: F_{\op P\otimes \op P', \op Q\otimes \op Q'}(\op X \square\op X') \to  F_{\op P\otimes \op P', \op Q\otimes \op Q'}(\op Y\square\op Y')$$
of $(\op P\otimes \op P', \op Q\otimes \op Q')$-bimodules.  

Define 
$$\xi: \Phi_{\op P,Q}(\op X, \op Y)\times \Phi_{\op P',\op Q'}(\op X', \op Y')\to  \Phi_{\op P\otimes \op P', \op Q\otimes \op Q'}(\op X\square \op X', \op Y\square \op Y')$$ 
by $\xi (a,b)=\big(\upsilon(a^{\sharp}\square b^{\sharp})\big)^{\flat}$.  This construction is clearly natural in all variables.
\end{proof}

\begin{proof}[Proof of Proposition \ref{prop:composition}] It is enough to check that $\xi(b, b')\xi(a, a')$ and $\xi(ba, b'a')$ agree on generators, i.e., that 
$$\Big(\xi(b, b')\xi(a, a')\Big)^{\sharp}=\Big(\xi(ba, b'a'))\Big)^{\sharp}: \op X\square \op X'\to F_{\op P\otimes \op P', \op Q\otimes \op Q'}(\op Z\square \op Z').$$
On the one hand, $\Big(\xi(ba, b'a')\Big)^{\sharp}$ is equal to the composite
{\small$$\xymatrix{\op X\square\op X'\ar[r]^(0.27){a^{\sharp}\square (a')^{\sharp}}&
		(\op P\circ \op Y\circ \op Q) \square (\op P'\circ \op Y'\circ \op Q')\ar [rr]^(0.45){\big(b^\sharp\square(b')^\sharp\big)^{\natural}}&&(\op P^{\circ 2}\circ \op Z\circ \op Q^{\circ 2}) \square ((\op P')^{\circ 2}\circ \op Z'\circ (\op Q')^{\circ 2})\ar [d]^{(\mu\circ \id\circ \mu)\square(\mu\circ \id\circ \mu)}\\
		&&&(\op P\circ \op Z\circ \op Q) \square (\op P'\circ \op Z'\circ \op Q')\ar [d]^{\upsilon}\\
		&&&(\op P\otimes \op P')\circ (\op Z\square \op Z')\circ (\op Q\otimes \op Q')}$$}
where $\big(b^\sharp\square(b')^\sharp\big)^{\natural}=(\id\circ (b)^\sharp\circ \id)\square(\id\circ (b')^\sharp\circ \id).$ 

On the other hand,  $\Big(\xi(b, b')\xi(a, a')\Big)^{\sharp}$ is equal to the following composite.
{\small$$\xymatrix{\op X\square\op X'\ar[r]^(0.27){a^{\sharp}\square (a')^{\sharp}}&
		(\op P\circ \op Y\circ \op Q) \square (\op P'\circ \op Y'\circ \op Q')\ar [r]^{\upsilon}&(\op P\otimes \op P')\circ (\op Y\square \op Y')\circ (\op Q\otimes \op Q')\ar [d]^{\id\circ \big(b^{\sharp}\square(b')^{\sharp}\big)\circ \id}\\
		&&(\op P\otimes \op P')\circ \big((\op P\circ \op Z\circ \op Q)\square (\op P'\circ \op Z'\circ \op Q')\big)\circ (\op Q\otimes \op Q')\ar [d]^{\id\circ \upsilon \circ \id}\\
		&&(\op P\otimes \op P')^{\circ 2} \circ(\op Z\square \op Z')\circ (\op Q\otimes \op Q')^{\circ 2} \ar [d]^{\mu\circ\id\circ \mu}\\
		&&(\op P\otimes \op P')\circ (\op Z\square \op Z')\circ (\op Q\otimes \op Q')}$$}

Since $\upsilon$ is a natural transformation,
{\small $$\xymatrix{(\op P\circ \op Y\circ \op Q) \square (\op P'\circ \op Y'\circ \op Q')\ar [rr]^(0.45){\big(b^\sharp\square(b')^\sharp\big)^{\natural}}\ar [d]^\upsilon&&(\op P^{\circ 2}\circ \op Y\circ \op Q^{\circ 2}) \square ((\op P')^{\circ 2}\circ \op Y'\circ (\op Q')^{\circ 2})\ar [d]^\upsilon\\
(\op P\otimes \op P')\circ (\op Y\square \op Y')\circ (\op Q\otimes \op Q')\;\ar [rr]^(0.4){\id\circ \big(b^{\sharp}\square(b')^{\sharp}\big)\circ \id}&&\;(\op P\otimes \op P')\circ \big((\op P\circ \op Z\circ \op Q)\square (\op P'\circ \op Z'\circ \op Q')\big)\circ (\op Q\otimes \op Q')}$$}
commutes.  The commutativity of diagram (\ref{eqn:tau}) in Theorem \ref{thm:tau}, applied to $\op Z$ and $\op Z'$, therefore suffices for us to conclude that $\Big(\xi(b, b')\xi(a, a')\Big)^{\sharp}=\Big(\xi(ba, b'a'))\Big)^{\sharp}$.
\end{proof}

\begin{proof}[Proof of Theorem \ref{thm:tot}]   Let $\op P$ and $\op Q$ be operads with multiplications denoted $\mu$, and consider a $(\op P,\op Q)$-bimodule $\op M$, with  left $\op P$-action $\lambda$ and right $\op Q$-action $\rho$ on $\op M$.   As a special case of a well known result for algebras over a monad, we know that there is a contractible coequalizer in  $\sseq$, natural in $\op M$,
\begin{equation}\label{eqn:contr-coeq}
\xymatrix{\op P^{\circ 2}\circ \op M\circ \op Q^{\circ 2} \ar @<1.5ex>[rr]^(0.55){a_{\op M}}\ar @<-1.5ex>[rr]_(0.55){b_{\op M}}&&\op P\circ \op M\circ \op Q\ar [ll] |(0.45){s_{\op M}}\ar @<1ex>[rr]^(0.6){q_{\op M}}&&\op M\ar @<1ex> [ll]^(0.4){e_{\op M}},}
\end{equation}
where $a_{\op M}=\id_{\op P}\circ\rho(\lambda \circ \id_{\op P})\circ \id_{\op P}$, $b_{\op M}=\mu\circ \id_{\op M}\circ \mu$, $s_{\op M}=\eta_{\op P}\circ \id_{\op P\circ \op M\circ \op P}\circ\eta_{\op P}$, $q_{\op M}=\rho(\lambda \circ \id_{\op P})$ and $e_{\op M}=\eta_{\op P}\circ \id_{\op M}\circ\eta_{\op P}$.  Moreover, the morphisms $a_{\op M}$, $b_{\op M}$ and $q_{\op M}$ all underlie morphisms of $(\op P,\op Q)$-bimodules such that
\begin{equation*}\label{eqn:coeq}
\xymatrix{F_{\op P, \op Q}(\op P\circ \op M\circ \op Q) \ar @<1ex>[rr]^(0.55){a_{\op M}}\ar @<-1ex>[rr]_(0.55){b_{\op M}}&&F_{\op P, \op Q}(\op M)\ar [rr]^(0.6){q_{\op M}}&&\op M}
\end{equation*}
is a coequalizer in  $\bimod{\op P,\op Q}$.

Let $\op M$ be a $(\op P,\op Q)$-bimodule and $\op N$ a $(\op P',\op Q')$-bimodule. Define $\op M\tot \op N$ to be (any representative of) the colimit of the following diagram of $(\op P\otimes \op P', \op Q\otimes \op Q')$-bimodules
\begin{equation}\label{eqn:double-coeq}
\xymatrix{F_{\op P\otimes \op P', \op Q\otimes \op Q'}\big( (\op P\circ \op M\circ\op Q) \square (\op P'\circ \op N\circ \op Q')\big)  \ar @<1ex>[rr]^(0.55){\xi(\id,a_{\op N})} \ar @<-1ex>[rr]_(0.55){\xi(\id,b_{\op N})} \ar @<1ex>[d]^{\xi(a_{\op M},\id)}\ar @<-1ex>[d]_{\xi(b_{\op M},\id)} &&F_{\op P\otimes\op P', \op Q\otimes \op Q'}( (\op P\circ \op M\circ\op Q) \square \op N) \ar @<1ex>[d]^{\xi(a_{\op M},\id)}  \ar @<-1ex>[d]_{\xi(b_{\op M},\id)}\\
F_{\op P\otimes \op P', \op Q\otimes \op Q'}\big( \op M \square (\op P'\circ \op N\circ \op Q')\big)  \ar @<1ex>[rr]^(0.55){\xi(\id,a_{\op N})} \ar @<-1ex>[rr]_(0.55){\xi(\id,b_{\op N})} &&F_{\op P\otimes\op P', \op Q\otimes \op Q'}( \op M \square \op N).}
\end{equation}
 By the universal property of colimits, for any morphism $c:\op M\to \op M'$ of $(\op P,\op Q)$-bimodules and any morphism $d:\op N\to \op N'$ of $(\op P',\op Q')$-bimodules, there is a unique, induced morphism of $(\op P\otimes\op P', \op Q\otimes \op Q')$-bimodules
$$c\tot d: \op M\tot \op N \to \op M'\tot \op N',$$
which is necessarily the identity, if $c$ and $d$ are identities.
By Proposition \ref{prop:composition} the tensor product of any bimodule morphisms is compatible with composition, by the universal property of the colimit, so that 
$$-\tot -: \bimod{\op P, \op Q}\times \bimod {\op P',\op Q'} \to \bimod {\op P\otimes \op P', \op Q\otimes \op Q'}$$ 
is indeed a functor.

It remains to check that $F_{\op P, \op Q}\op X\tot F_{\op P',\op Q'}\op X'$ can be chosen to be $\op  F_{\op P\otimes \op P',  \op Q\otimes \op Q'}(\op X\square \op X')$ for all symmetric sequences $\op X$ and $\op X'$.  Observe first that the contractible coequalizer (\ref{eqn:contr-coeq}) for $\op M=F_{\op P,\op Q}\op X$ induces a contractible coequalizer for any symmetric sequence $\op Z$
{\small\begin{equation*}\label{eqn:ugh}
\xymatrix{(\op P\otimes \op P')\circ\big((\op P^{\circ 2}\circ \op X\circ \op Q^{\circ 2})\square \op Z\big)\circ (\op Q\otimes \op Q') \ar @<1.75ex>[rr]^(0.53){\xi(a_{F_{\op P,\op Q}\op X},\id)}\ar @<-1.75ex>[rr]_(0.53){\xi(b_{F_{\op P,\op Q}X},\id)}&&(\op P\otimes \op P')\circ\big((\op P\circ \op X\circ \op Q)\square \op Z\big)\circ (\op Q\otimes \op Q')\ar [ll] |(0.47){\xi(s_{F_{\op P,\op Q}X},\id)}\ar @<1ex>[d]^{\xi(q_{F_{\op P,\op Q}X},\id)}\\
&&(\op P\otimes \op P')\circ(\op X\square \op Z)\circ (\op Q\otimes \op Q')\ar @<1ex> [u]^{\xi(e_{F_{\op P,\op Q}X},\id)}}
\end{equation*}}
in $\sseq$, where $\id$ refers to the identity morphism on $F_{\op P',\op Q'}\op Z$.  Consequently, 
{\begin{equation*}
\xymatrix{F_{\op P\otimes \op P', \op Q \otimes \op Q'}\big((\op P^{\circ 2}\circ \op X\circ \op Q^{\circ 2})\square \op Z\big) \ar @<1ex>[rr]^(0.5){\xi(a_{F_{\op P,\op Q}X},\id)}\ar @<-1ex>[rr]_(0.5){\xi(b_{F_{\op P,\op Q}X},\id)}&&F_{\op P\otimes \op P',\op Q\otimes \op Q'}\big((\op P\circ \op X\circ \op Q)\square \op Z\big)\ar [d]^{\xi(q_{F_{\op P,\op Q}X},\id)}\\
&&F_{\op P\otimes \op P', \op Q\otimes \op Q'}(\op X\square \op Z)}
\end{equation*}}
is a coequalizer in  $\bimod{\op P\otimes \op P', \op Q\otimes \op Q'}$. 

Consider diagram (\ref{eqn:double-coeq}) for $\op M=F_{\op P,\op Q}\op X$ and $\op N=F_{\op P',\op Q'}\op X'$.  Applying the observation above to the two horizontal coequalizers and then to the resulting vertical coequalizer, we see that $F_{\op P\otimes \op P',\op Q\otimes \op Q'}(\op X\square\op X')$ does indeed represent the colimit of (\ref{eqn:double-coeq}), as desired. Moreover, if $a:F_{\op P,\op Q}\op X \to F_{\op P,\op Q}\op Y$ and $b: F_{\op P',\op Q'}\op X'\to F_{\op P',\op Q'}\op Y'$ are bimodule morphisms, then the universal property of the colimit implies that $a\tot b=\xi(a,b)$.
\end{proof}

\section{The algebra of the divided powers functor}

The divided powers functor is interesting not only for the role it plays in the matrix monoidal structure.  We describe in this section certain algebraic properties of the divided powers functor, which we will apply in an forthcoming article, when we construct a model for the space of long links.

\subsection{Divided powers and bimodules}

We establish in this section that for all symmetric operads $\op P$, $\op P'$, $\op Q$ and $\op Q'$ and for all $(\op P',\op Q')$-bimodules $\op M'$, the functor $-\tot \op M': \bimod{\op P,\op Q} \to \bimod{\op P\otimes\op P', \op Q\otimes \op Q'}$  possesses a right adjoint, constructed using the divided powers functor.  We first need to show that the divided powers functor preserves bimodule structure, which is interesting in itself.

\begin{prop}\label{prop:bimod} Let $\op P$ and $\op Q$ be symmetric operads.  For every $n\geq 1$, the $n^{\text{th}}$-divided powers functor $\gamma^{\mathfrak S}_{n}:\sseq\to \sseq$ restricts and corestricts to a functor $$\gamma^{\mathfrak S}_{n}: \bimod{(\op P, \op Q)} \to \bimod{(\op P, \op Q)}.$$
\end{prop}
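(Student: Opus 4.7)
The plan is to transport the bimodule structure from $\op M$ to $\gamma^{\mathfrak S}_n(\op M)$ by constructing natural intertwiners relating the composition product to the divided powers functor, and then reducing the bimodule axioms for the transported structure to those of $\op M$ via naturality. Specifically, for any operad $\op P$ and any symmetric sequence $\op X$, I would construct a natural morphism of $\mathfrak S$-sequences
$$\iota^L_{\op P, \op X}: \op P \circ \gamma^{\mathfrak S}_n(\op X) \to \gamma^{\mathfrak S}_n(\op P \circ \op X),$$
defined at arity $m$ by sending a class represented by $(p; \bar x_1, \ldots, \bar x_k; \tau)$, with $p \in \op P(k)$, $\bar x_i \in \op X(m_i n)$, $\sum m_i = m$ and $\tau \in \mathfrak S_m$, to the class of $(p; \bar x_1, \ldots, \bar x_k; \nu_{m,n}(\tau, e_n))$ in $(\op P \circ \op X)(mn) = \gamma^{\mathfrak S}_n(\op P \circ \op X)(m)$. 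The symmetric construction yields $\iota^R_{\op X, \op Q}: \gamma^{\mathfrak S}_n(\op X) \circ \op Q \to \gamma^{\mathfrak S}_n(\op X \circ \op Q)$. The key combinatorial identity needed to make these well-defined is
$$\nu_{m,n}\bigl(\sigma(m_1, \ldots, m_k), e_n\bigr) = \sigma(m_1 n, \ldots, m_k n) \quad \text{in } \mathfrak S_{mn},$$
expressing that permuting $k$ blocks commutes with ``scaling each block by a factor of $n$''; equivariance with respect to $\mathfrak S_m$ then follows at once from the homomorphism property of $\nu_{m,n}$.

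Given a $(\op P,\op Q)$-bimodule $(\op M, \lambda, \rho)$, I would then define
$$\lambda' := \gamma^{\mathfrak S}_n(\lambda) \circ \iota^L_{\op P, \op M}, \qquad \rho' := \gamma^{\mathfrak S}_n(\rho) \circ \iota^R_{\op M, \op Q},$$
as the candidate left $\op P$-action and right $\op Q$-action on $\gamma^{\mathfrak S}_n(\op M)$. Functoriality in bimodule morphisms is then immediate from the naturality of $\iota^L, \iota^R$ and the functoriality of $\gamma^{\mathfrak S}_n$ on $\seq{\mathfrak S}$.

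To verify the bimodule axioms, I would reduce each axiom for $(\lambda', \rho')$ to the corresponding axiom for $(\lambda, \rho)$ by combining naturality of $\iota^L, \iota^R$ with three coherence identities to be checked directly from the explicit formulas: (i) compatibility of $\iota^L$ with operad multiplication on $\op P$, giving associativity of $\lambda'$; (ii) the analogous statement for $\iota^R$ and $\op Q$, giving associativity of $\rho'$; and (iii) the commutation
$$\iota^L_{\op P, \op M \circ \op Q} \circ (\id_{\op P} \circ \iota^R_{\op M, \op Q}) = \iota^R_{\op P \circ \op M, \op Q} \circ (\iota^L_{\op P, \op M} \circ \id_{\op Q}),$$
giving compatibility of $\lambda'$ and $\rho'$. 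The unit axioms follow from $\nu_{m,n}(e_m, e_n) = e_{mn}$.

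The main technical obstacle will be checking that $\iota^L$ and $\iota^R$ really are well-defined on equivalence classes in the composition product. The block-permutation relation $(x; y_1, \ldots, y_k; \tau_1 \oplus \cdots \oplus \tau_k) \sim (x; y_1\cdot \tau_1^{-1}, \ldots, y_k\cdot \tau_k^{-1}; \id)$ transfers cleanly because $\nu_{-,n}$ is a group homomorphism, but the $\mathfrak S_k$-equivariance relation requires the block-expansion identity above together with a careful comparison of the composition products at arities $m$ and $mn$. Once $\iota^L$ and $\iota^R$ are in hand, the remainder of the proof is routine diagram chasing using naturality and functoriality of $\gamma^{\mathfrak S}_n$.
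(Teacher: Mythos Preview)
Your organization via intertwiners $\iota^L$ and $\iota^R$ is sound and, once unpacked, coincides with the paper's explicit formulas; indeed your $\lambda'=\gamma_n^{\mathfrak S}(\lambda)\,\iota^L$ reproduces exactly the paper's left action
\[
\widetilde\lambda\big(p;\gamma_n^{\mathfrak S}(x_1),\dots,\gamma_n^{\mathfrak S}(x_k)\big)=\gamma_n^{\mathfrak S}\big(\lambda(p;x_1,\dots,x_k)\big).
\]
The block-permutation identity you isolate is the right combinatorial ingredient for well-definedness of $\iota^L$.

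The gap is on the right. The phrase ``the symmetric construction yields $\iota^R$'' is misleading, because the two sides are \emph{not} symmetric: an element of $\big(\gamma_n^{\mathfrak S}(\op X)\circ\op Q\big)(m)$ has the form $(\bar x; q_1,\dots,q_l;\tau)$ with $\bar x\in\op X(ln)$, and there is no way to feed only $l$ inputs into $\bar x$ inside $\op X\circ\op Q$. The correct $\iota^R$ must duplicate each $q_j$ $n$ times,
\[
(\bar x; q_1,\dots,q_l;\id)\longmapsto \big(\bar x;\underbrace{q_1,\dots,q_1}_{n},\dots,\underbrace{q_l,\dots,q_l}_{n};\id\big),
\]
which is precisely the paper's formula for $\widetilde\rho$. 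This diagonal step is where the actual work lies: checking that $\iota^R$ is well defined and $\mathfrak S_m$-equivariant uses that $\nu_{m,n}(-,e_n):\mathfrak S_m\to\mathfrak S_{mn}$ factors through the diagonal $\mathfrak S_m\to\mathfrak S_m^{\times n}$ followed by the ``column-wise'' embedding $\alpha:\mathfrak S_m^{\times n}\to\mathfrak S_{mn}$, $\alpha(\varphi_1,\dots,\varphi_n)(i,j)=(\varphi_j(i),j)$. This is the observation the paper singles out as the one place needing care. Your coherence diagram (iii) and the associativity of $\rho'$ also hinge on this duplication behaving correctly under iterated composition, so none of the ``routine diagram chasing'' can begin until $\iota^R$ is written down explicitly and this equivariance is checked.
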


\begin{proof} Let $\lambda : \op P\circ \op M\to \op M$ and $\rho: \op M\circ \op Q\to \op M$ denote the left $\op P$-action and the right $\op Q$-action on a $(\op P, \op Q)$-bimodule $\op M$.  For any $x\in \op M(mn)$, let $\gamma_{n}^{\mathfrak S}(x)$ denote the corresponding element of $\gamma_{n}^{\mathfrak S}(\op M)(m)$.

Define $\widetilde\lambda: \op P\circ \gamma_{n}^{\mathfrak S}(\op M)\to \gamma_{n}^{\mathfrak S}(\op M)$ and $\widetilde \rho: \gamma_{n}^{\mathfrak S}(\op M) \circ \op Q\to \gamma_{n}^{\mathfrak S}(\op M)$ by
$$\widetilde\lambda \big(p; \gamma_{n}^{\mathfrak S}(x_{1}),...,\gamma_{n}^{\mathfrak S}(x_{k})\big)=\gamma_{n}^{\mathfrak S}\big(\lambda (p;x_{1},...,x_{k})\big)$$
and
$$\widetilde\rho\big(\gamma_{n}^{\mathfrak S}(x); q_{1},...,q_{l}\big)=\gamma_{n}^{\mathfrak S}\big(\rho(x; \underbrace{q_{1},..,q_{1}}_{n},....,\underbrace{q_{l},..,q_{l}}_{n})\big)$$
for all $p\in \op P(k)$, $x_{i}\in \op M(m_{i}n)$ for $1\leq i\leq k$, $x\in \op M(ln)$ and $q_{j}\in \op Q(r_{j})$ for $1\leq j\leq l$.

It is a straightforward exercise to check that $\widetilde\lambda$ and $\widetilde\rho$ endow with $\gamma_{n}^{\mathfrak S}(\op M)$ with the structure of a $(\op P, \op Q)$-bimodule.  One needs only to be a little careful in establishing equivariance with respect to the symmetric group actions on the right and in showing that $\widetilde \rho$ is well defined.  It is helpful to remark that for any $k,l\geq1$, the homomorphism $\nu_{k,l}(-,e): \mathfrak S_{k}\to \mathfrak S_{kl}$ factors, via the diagonal map, through the homomorphism $\alpha:\mathfrak S_{k}^{\times l}\to \mathfrak S_{kl}$ given by 
$$\alpha(\vp_{1},...,\vp_{l})(i,j)=\big(\vp_{j}(i),j\big),$$
where we see $\mathfrak S_{kl}$ as the set of permutations of $\{1,...,k\}\times\{1,...,l\}$.
\end{proof}

Before proving the main result of this section, we need a bit more notation.

\begin{notn} For any symmetric operads $\op P$ and $\op Q$, and any $(\op P, \op Q)$-bimodules $\op M$ and $\op N$, let $\map_{\op P, \op Q}(\op M, \op N)$ denote the simplicial mapping space of $(\op P, \op Q)$-bimodule maps from $\op M$ to $\op N$, which can be constructed as the equalizer of the obvious two maps from $\map _{\mathfrak S}(\op M, \op N)$ to $\map _{\mathfrak S} (\op P\circ \op M\circ \op Q, \op N) $, built from the $\op P$- and $\op Q$-actions on $\op M$ and $\op N$. 
\end{notn}

\begin{prop}\label{prop:closed} Let $\op P$, $\op P'$, $\op Q$ and $\op Q'$ be symmetric operads. For every $(\op P',\op Q')$-bimodule $\op M'$, there is an adjunction
$$\adjunct{\bimod{\op P, \op Q}}{\bimod{\op P\otimes\op P',  \op Q\otimes \op Q'}}{-\tot \op M'}{\map_{\op P',\op Q'}\big(\op M', \gamma^{\mathfrak S}_{\bullet}(-)\big)}.$$
\end{prop}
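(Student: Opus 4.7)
The plan is to first establish the adjunction for free bimodules on both sides, then extend via the standard reflexive coequalizer presentation. Before the free case, I would pin down the $(\op P, \op Q)$-bimodule structure on $\map_{\op P', \op Q'}\bigl(\op M', \gamma^{\mathfrak S}_\bullet(\op N)\bigr)$: by Proposition \ref{prop:bimod}, each $\gamma^{\mathfrak S}_n(\op N)$ is a $(\op P \otimes \op P', \op Q \otimes \op Q')$-bimodule, and restriction along the canonical operad maps $\op P \to \op P \otimes \op P'$ and $\op Q \to \op Q \otimes \op Q'$ yields a $(\op P, \op Q)$-action that commutes with the $(\op P', \op Q')$-action thanks to the interchange relations defining the Boardman-Vogt tensor product.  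This residual $(\op P, \op Q)$-action descends to the $(\op P', \op Q')$-equivariant mapping space; varying $n$ (and using the $\mathfrak S_n$-action from Remark \ref{rmk:action}) assembles it into the claimed $(\op P, \op Q)$-bimodule.

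For free bimodules $\op M = F_{\op P, \op Q}(\op X)$ and $\op M' = F_{\op P', \op Q'}(\op X')$, Theorem \ref{thm:tot} gives $\op M \tot \op M' = F_{\op P \otimes \op P', \op Q \otimes \op Q'}(\op X \square \op X')$, so the adjunction isomorphism is the composite
\begin{align*}
\bimod{\op P \otimes \op P', \op Q \otimes \op Q'}\bigl(F_{\op P \otimes \op P', \op Q \otimes \op Q'}(\op X \square \op X'), \op N\bigr)
&\cong \sseq(\op X \square \op X', \op N) \\
&\cong \sseq\bigl(\op X, \map_{\mathfrak S}(\op X', \gamma^{\mathfrak S}_\bullet(\op N))\bigr) \\
&\cong \bimod{\op P, \op Q}\bigl(F_{\op P, \op Q}(\op X), \map_{\op P', \op Q'}(F_{\op P', \op Q'}(\op X'), \gamma^{\mathfrak S}_\bullet(\op N))\bigr),
\end{align*}
obtained from the free-forgetful adjunctions and Proposition \ref{prop:adjunction}.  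The last isomorphism uses that a $(\op P', \op Q')$-equivariant map out of $F_{\op P', \op Q'}(\op X')$ is the same as a symmetric-sequence map out of $\op X'$, once one has put the bimodule structure on the target in place.

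To handle arbitrary $\op M$ and $\op M'$, I would invoke the reflexive coequalizer (\ref{eqn:contr-coeq}) presenting every bimodule as a colimit of free ones.  By construction (diagram (\ref{eqn:double-coeq})), $-\tot-$ is assembled as a double colimit of free-free cases, so it preserves such coequalizers in each argument, while $\bimod{\op P \otimes \op P', \op Q \otimes \op Q'}(-, \op N)$ and $\map_{\op P', \op Q'}\bigl(-, \gamma^{\mathfrak S}_\bullet(\op N)\bigr)$ both convert colimits in their contravariant slot into limits.  Chasing the free-bimodule adjunction through the two coequalizers (one for $\op M$, one for $\op M'$) then yields the desired natural bijection, with naturality inherited from the free case.

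The main obstacle will be the bimodule-structure step: the $(\op P, \op Q)$-action on $\gamma^{\mathfrak S}_n(\op N)$ described in the proof of Proposition \ref{prop:bimod} involves diagonal-style insertions (repeating each $q_j$ a total of $n$ times), and one must verify that these genuinely commute with the $(\op P', \op Q')$-equivariance condition defining $\map_{\op P', \op Q'}$, so that the mapping space inherits a well-defined bimodule structure.  A secondary subtlety is checking that the free-bimodule bijection of the second step is compatible with the coequalizer maps used in the third, which should follow from the naturality of the transformation $\upsilon$ (and hence of $\xi$) underlying the construction of $\tot$.
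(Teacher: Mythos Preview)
Your proposal is correct and follows essentially the same route as the paper: establish the adjunction first in the free-free case via the same chain of isomorphisms (free/forgetful, Proposition~\ref{prop:adjunction}, then free/forgetful again), and then extend to arbitrary $\op M$ and $\op M'$ by the reflexive coequalizer presentations, using that $-\tot-$ is built as a double colimit of frees. The paper carries out the extension in two explicit stages (first pass to arbitrary $\op M$ with $\op M'$ still free via a coequalizer of the form~(\ref{eqn:coeq3}), then to arbitrary $\op M'$ via~(\ref{eqn:coeq4})), whereas you describe both steps at once, but the content is the same.
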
  

\begin{proof} Let $\op M$ be any $(\op P, \op Q)$-bimodule, and consider the coequalizer
\begin{equation*}\label{eqn:coeq2}
\xymatrix{F_{\op P, \op Q}(\op P\circ \op M\circ \op Q) \ar @<1ex>[rr]^(0.55){a_{\op M}}\ar @<-1ex>[rr]_(0.55){b_{\op M}}&&F_{\op P, \op Q}(\op M)\ar [rr]^(0.6){q_{\op M}}&&\op M}
\end{equation*}
in  $\bimod{\op P,\op Q}$.  Let $\op X'$ be any symmetric sequence. Since 
$$F_{\op P, \op Q}(\op X)\tot F_{\op P', \op Q'}(\op X') =F_{\op P\otimes \op P', \op Q\otimes \op Q'}(\op X\square \op X')$$ 
for every symmetric sequence $\op X$, it follows easily, by a comparison of colimits similar that at the end of the proof of Theorem \ref{thm:tot}, that 
{\small{\begin{equation}\label{eqn:coeq3}
\xymatrix{F_{\op P, \op Q}(\op P\circ \op M\circ \op Q)\tot F_{\op P', \op Q'}(\op X')  \ar @<1ex>[rr]^(0.55){a_{\op M}\tot \id}\ar @<-1ex>[rr]_(0.55){b_{\op M}\tot \id}&&F_{\op P, \op Q}(\op M)\tot F_{\op P', \op Q'}(\op X')\ar [r]^(0.55){q_{\op M}\tot \id}&\op M\tot F_{\op P', \op Q'}(\op X')}
\end{equation}}}

\noindent is a coequalizer diagram in $\bimod{\op P\otimes \op P',\op Q\otimes \op Q'}$.   Another straightforward comparison of colimits then implies that 
{\small{\begin{equation}\label{eqn:coeq4}
\xymatrix{\op M\tot F_{\op P', \op Q'}(\op P'\circ \op M'\circ \op Q')  \ar @<1ex>[rr]^(0.55){\id\tot a_{\op M'}}\ar @<-1ex>[rr]_(0.55){\id\tot b_{\op M'}}&&\op M\tot F_{\op P', \op Q'}(\op M')\ar [rr]^(0.6){\id \tot q_{\op M'}}&&\op M\tot \op M'}
\end{equation}}}
is also a coequalizer diagram in $\bimod{\op P\otimes \op P',\op Q\otimes \op Q'}$, for any $(\op P', \op Q')$-bimodule $\op M'$.

We check the adjunction first in the case of free bimodules. Let $\op X$ and $\op X'$ be symmetric sequences and $\op N$ any $(\op P\otimes \op P', \op Q\otimes \op Q')$-bimodule.
{\small\begin{align*}
\bimod{\op P\otimes  \op P',\op Q\otimes \op Q'}\big(F_{\op P,\op Q}\op X \tot F_{\op P',\op Q'}\op X', \op N\big)&=\bimod{\op P\otimes  \op P',\op Q\otimes \op Q'}\big(F_{\op P\otimes \op P', \op Q\otimes \op Q'}(\op X\square\op X'), \op N\big)\\
&\cong\sseq(\op X\square \op X', \op N)\\
&\cong \sseq \bigg(\op X, \map_{\mathfrak S}\big(\op X', \gamma_{\bullet}^{\mathfrak S}(\op N)\big)\bigg)\\
&\cong \sseq\bigg(\op X, \map_{\op P', \op Q'}\big(F_{\op P',\op Q'}\op X', \gamma_{\bullet}^{\mathfrak S}(\op N)\big)\bigg)\\
&\cong \bimod{\op P,\op Q}\bigg(F_{\op P, \op Q}\op X, \map_{\op P',\op Q'}\big(F_{\op P',\op Q'}\op X', \gamma_{\bullet}^{\mathfrak S}(\op N)\big)\bigg),\\
\end{align*}}
\noindent where we applied Proposition \ref{prop:adjunction} in the third step. Note that we also used above that $\gamma_{\bullet}^{\mathfrak S}(\op N)\in (\bimod{\op P\otimes \op P', \op Q\otimes \op Q'})^{\mathbb N}$, so that 
$\map_{\op P',\op Q'}\big(F_{\op P',\op Q'}\op X', \gamma_{\bullet}^{\mathfrak S}(\op N)\big)$ is naturally a $(\op P,\op Q)$-bimodule.

Together with coequalizer (\ref{eqn:coeq3}), the isomorphism above implies that
$$\bimod{\op P\otimes  \op P',\op Q\otimes \op Q'}\big(\op M \tot F_{\op P',\op Q'}\op X', \op N\big)\cong\bimod{\op P, \op Q}\bigg(\op M, \map_{\op P',\op Q'}\big(F_{\op P',\op Q'}\op X', \gamma_{\bullet}^{\mathfrak S}(\op N)\big)\bigg)$$
for every $(\op P,\op Q)$-bimodule $\op M$, symmetric sequence $\op X'$ and $(\op P\otimes \op P', \op Q\otimes \op Q')$-bimodule $\op N$.  We can therefore conclude from coequalizer (\ref{eqn:coeq4}) that  
$$\bimod{\op P\otimes  \op P',\op Q\otimes \op Q'}\big(\op M \tot \op M', \op N\big)\cong\bimod{\op P, \op Q}\bigg(\op M, \map_{\op P', \op Q'}\big(\op M', \gamma_{\bullet}^{\mathfrak S}(\op N)\big)\bigg)$$
for every $(\op P,\op Q)$-bimodule $\op M$, $(\op P', \op Q')$-bimodule $\op M'$ and $(\op P\otimes \op P', \op Q\otimes \op Q')$-bimodule $\op N$.
\end{proof}

\subsection{Divided powers and operads}

Under nice conditions, the divided powers functor applied to a nonsymmetric operad gives rise to a nonunital operad, which can have interesting applications, as we will show in our construction of a model for the space of long links.  

Let $\cat{Op^{nu}}$ denote the category of nonunital, nonsymmetric operads, i.e., of objects $\op P\in \seq{}$, endowed with an associative multiplication $\mu: \op P\circ \op P\to \op P$, though not necessarily any unit.

\begin{defn}\label{defn:axial} An \emph{$n$-axial structure} on a nonsymmetric operad $(\op P, \mu)$ consists of a set
$$\big\{\iota_{k}: \op P(kn) \to \op P(k)^{\times n}\mid k\geq 0\big\}$$
of simplicial monomorphisms such that for all $k\geq 1$ and $m_{1},.., m_{k}\in \mathbb N$, the composite 
{\small $$\xymatrix{\op P(kn)\times \op P (m_{1}n)\times \cdots \times \op P(m_{k}n) \ar [d]_{\iota _{k}\times \iota_{m_{1}}\times\cdots \times \iota _{m_{k}}}&&\\
\op P(k)^{\times n}\times \op P (m_{1})^{\times n}\times \cdots \times \op P(m_{k})^{\times n}\ar[r]^{\cong}&\big(\op P(k)\times \op P (m_{1})\times \cdots \times \op P(m_{k})\big)^{\times n}\ar [r]^(0.7){\mu^{\times n}}&\op P(m)^{\times n}}$$}

\noindent factors through $\iota _{m}$, i.e., there exists $\widehat\mu: \op P(kn)\times \op P (m_{1}n)\times \cdots \times \op P(m_{k}n) \to \op P(mn)$ such that
{\small $$\xymatrix{\op P(kn)\times \op P (m_{1}n)\times \cdots \times \op P(m_{k}n) \ar [d]_{\iota _{k}\times \iota_{m_{1}}\times\cdots \times \iota _{m_{k}}}\ar [rr]^{\widehat\mu}&&\op P(mn)\ar[d]^{\iota_{m}}\\
\op P(k)^{\times n}\times \op P (m_{1})^{\times n}\times \cdots \times \op P(m_{k})^{\times n}\ar[r]^{\cong}&\big(\op P(k)\times \op P (m_{1})\times \cdots \times \op P(m_{k})\big)^{\times n}\ar [r]^(0.7){\mu^{\times n}}&\op P(m)^{\times n}}$$}

\noindent commutes.
\end{defn}

\begin{rmk} Since the maps $\iota_{k}$ comprising an $n$-axial structure are monomorphisms, the morphism $\widehat \mu$ of the definition above is necessarily unique.  It follows that any morphism of operads with $n$-axial structure that preserves the axial structure must also commute with the corresponding morphisms $\widehat\mu$.
\end{rmk}

\begin{rmk} Igusa proved in \cite{igusa} that the forgetful functor $U$ from reduced operads to simplicial monoids, which sends an operad to its component in arity 1, admits a right adjoint $R$ such that $RM(k)=M^{\times k}$, for any simplicial monoid $M$.  In \cite[Definition 7.6] {fiedorowicz-vogt}, Fiedorowicz and Vogt defined an \emph{axial operad} in $\cat{sSet}$ to be a reduced, $\mathfrak S$-free operad $\op P$ such that the unit map $\eta_{\op P}:\op P\to RU\op P$, which is a morphism of operads, is arity-wise injective, i.e., $\eta_{\op P}(k):\op P(k)\to \op P(1)^{\times k}$ is an injection for every $k$.  Every axial operad clearly admits an $n$-axial structure for all $n\geq 1$.
\end{rmk}

\begin{notn} Let $\cat{Op^{ax_{n}}}$ denote the category of operads endowed with $n$-axial structures and of morphisms preserving this extra structure.
\end{notn}

\begin{ex} The nonsymmetric associative operad $\op A$ admits an $n$-axial structure for all $n$, since $\op A(k)$ is  a constant simplicial set on exactly one point.  If $\delta_{k}$ denotes the unique element of $\op A(k)_{0}$, then we can define $\iota_{kn}(\delta_{kn})=(\delta_{k},....,\delta_{k})$. 
\end{ex}

\begin{ex}  As Fiedorowicz and Vogt showed in \cite[Section 8]{fiedorowicz-vogt}, the reduced $W$-construction $W\op P$ on any simplicial operad $\op P$ is naturally an axial operad, so that every simplicial operad admits an axial cofibrant replacement.   They proved furthermore that if $\op P$ and $\op Q$ are the nerve of the operads in $\cat {Cat}$ parametrizing $m$-fold and $n$-fold monoidal categories, respectively, then $W\op P\otimes W\op Q$ is also axial  \cite[Corollary 10.6]{fiedorowicz-vogt}.  This result plays a crucial role in their proof that the Boardman-Vogt tensor product of a cofibrant $E_{m}$-operad and a cofibrant $E_{n}$-operad is an $E_{m+n}$-operad.
\end{ex}

\begin{ex} For all $m$, the nonsymmetric operad underlying the little $m$-balls operad $\op B_{m}$ admits an $n$-axial structure for all $n$.  Let $B^{m}$ denote the open unit $m$-ball in $\mathbb R^{m}$, centered at the origin. Recall that $\op B_{m}(k)=\operatorname{sEmb}(\coprod_{k}B^{m}, B^{m})$, the space of standard embeddings, i.e., of embeddings $g:\coprod_{k}B^{m}\to B^{m}$ given on each component by translation and multiplication by a positive scalar \cite{arone-turchin}.  We denote  an element of $\op B_{m}(k)$ by $(b_{1},...,b_{k})$, an ordered sequence of standard embeddings.

 For all $k\geq 1$, let 
$$\iota_{k}:\op B_{m}(kn)\to \op B_{m}(k)^{\times n}: (b_{1},..., b_{kn}) \mapsto \big( (b_{1,1},...,b_{k,1}),...,(b_{1,n},..., b_{k,n})\big),$$
where $b_{i,j}:=b_{(i-1)n+j}$ for all $1\leq i\leq k$ and $1\leq j\leq n$, which is clearly injective.  The morphism $\iota _{k}$ thus partitions a collection of $kn$ disjoint balls contained in $B^{m}$ into $n$ collections of $k$ disjoint balls.
\medskip

\centerline{\includegraphics [scale=0.8]{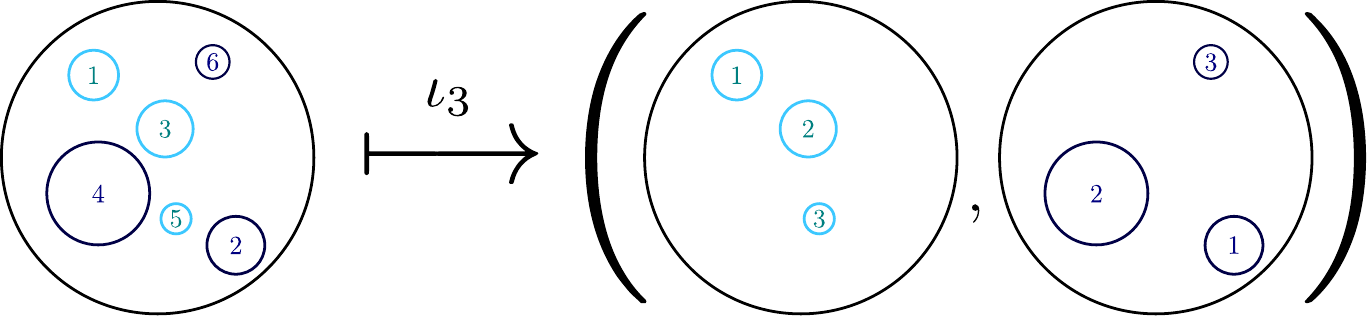}}
\medskip

Moreover it is not hard to see that we can define an appropriate
$$\widehat \mu:\op B_{m}(kn)\times \op B_{m}(l_{1}n) \times \cdots \times \op B_{m}(l_{k}n) \to \op B_{m}(ln)$$
by
$$\widehat \mu \big( (b_{1},..., b_{kn}); (b^{1}_{1},..., b^{1}_{l_{1}n}),..., (b^{k}_{1},..., b^{k}_{l_{k}n})\big)=(b'_{1},...,b'_{ln}),$$
where
$$(b'_{j}, b'_{n+j},...,b'_{(l-1)n+j})=(b_{1,j},...,b_{k,j})\big((b^{1}_{1,j_{}},...,b^{1}_{l_{1},j_{}}),...,(b^{k}_{1,j_{}},...,b^{k}_{l_{k},j_{}})\big)$$
for all $1\leq j\leq n$. 
\bigskip

\centerline{\includegraphics [scale=0.8]{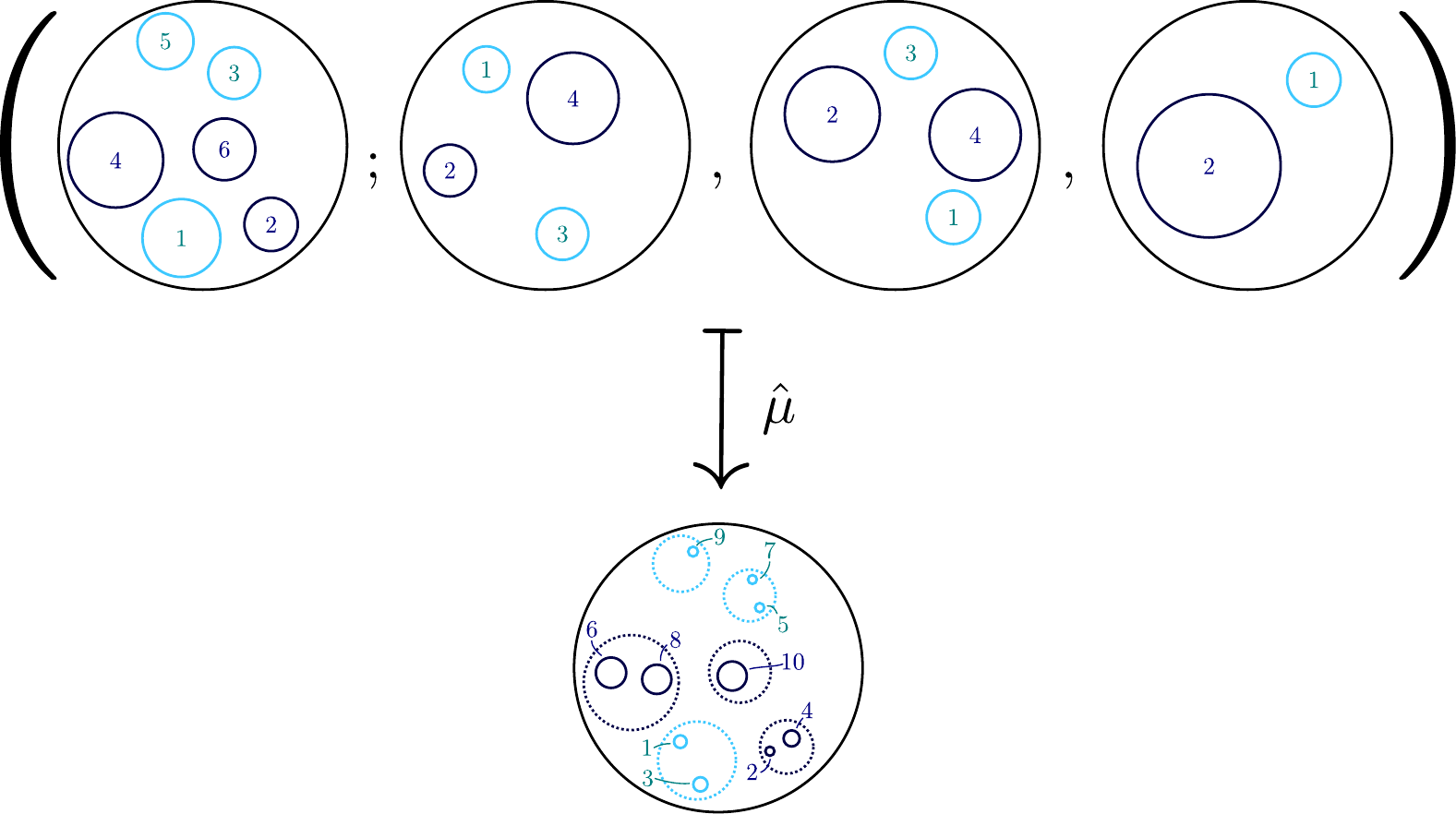}}
\bigskip

In other words, given collections of $kn$ disjoint balls and of $l_{s}n$ disjoint balls for $1\leq s\leq k$, the map $\widehat \mu$ evaluates the $j^{\text{th}}$ collection of $k$ disjoint balls, for each $j$,   on the sequence of $j^{\text{th}}$ collections of $l_{s}$ disjoint balls, for $1\leq s\leq k$, where the partitions used are those given by $\iota_{k}$ and $\iota _{ls}$, $1\leq s\leq k$.

The natural operad map $\vp_{m}:\op B_{1}\to \op B_{m}$, induced by the inclusion $\R \hookrightarrow \R^{m}$, clearly respects $n$-axial structure. 

\end{ex}

\begin{prop}\label{prop:axial}The nonsymmetric divided powers functor induces a functor
$$\gamma_{n}:\cat{Op^{ax_{n}}}\to \cat{Op^{nu}}.$$
\end{prop}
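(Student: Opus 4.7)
The plan is to endow the shifted sequence $\gamma_n(\op P)$, which satisfies $\gamma_n(\op P)(m)=\op P(mn)$ in the nonsymmetric setting $\G=\N$, with a nonunital operad structure whose composition is precisely the map $\widehat\mu$ furnished by the $n$-axial structure. Functoriality will then be immediate from the remark following Definition \ref{defn:axial}: a morphism $f\colon\op P\to\op Q$ in $\cat{Op^{ax_n}}$ preserves $\iota$ by assumption and therefore, by uniqueness from the monomorphism property, also commutes with $\widehat\mu$, so that the arity-$mn$ components of $f$ assemble into a morphism $\gamma_n(f)\colon\gamma_n(\op P)\to\gamma_n(\op Q)$ in $\cat{Op^{nu}}$.

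The only substantive point to verify is associativity of $\widehat\mu$. Fix $p\in\op P(kn)$, elements $p_i\in\op P(m_in)$ for $1\le i\le k$, and $p_{i,j}\in\op P(l_{i,j}n)$ for $1\le i\le k$ and $1\le j\le m_i$; set $l=\sum_{i,j}l_{i,j}$. The goal is the equality
$$\widehat\mu\big(\widehat\mu(p;p_1,\ldots,p_k);\,p_{1,1},\ldots,p_{k,m_k}\big)=\widehat\mu\big(p;\,\widehat\mu(p_1;p_{1,1},\ldots,p_{1,m_1}),\ldots,\widehat\mu(p_k;p_{k,1},\ldots,p_{k,m_k})\big)$$
as elements of $\op P(ln)$. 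Since $\iota_l$ is a simplicial monomorphism, it suffices to check this after postcomposition with $\iota_l$. Two nested applications of the defining commutative square of Definition \ref{defn:axial}, together with the canonical isomorphism interchanging product factors, reduce each side to the same element of $\op P(l)^{\times n}$, namely the $n$-fold product of the two-stage $\mu$-composite constructed from the coordinates of $\iota_k(p)$, $\iota_{m_i}(p_i)$, and $\iota_{l_{i,j}}(p_{i,j})$. Both composites therefore agree, by ordinary associativity of $\mu$ applied componentwise.

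The main obstacle is really just bookkeeping: carefully tracing the two nested instances of the axial square so as to identify the common image in $\op P(l)^{\times n}$, while keeping track of the reindexing $(i,j)\mapsto(i-1)m_i+j$ that is implicit in how $\widehat\mu$ assembles its outputs. Once this is done, no further work is needed: the category $\cat{Op^{nu}}$ imposes no unit or equivariance conditions, and the simplicial structure is inherited from $\op P$ because $\widehat\mu$ is a map of simplicial sets by construction.
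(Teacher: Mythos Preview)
Your proof is correct and follows exactly the same approach as the paper: define the multiplication on $\gamma_n(\op P)$ via $\widehat\mu$, deduce associativity from the associativity of $\mu$ together with the fact that each $\iota_k$ is a monomorphism, and read off functoriality from uniqueness of $\widehat\mu$. Your write-up merely spells out the associativity verification that the paper leaves as a one-line remark; the only slip is the reindexing formula $(i,j)\mapsto (i-1)m_i+j$, which should be $(i,j)\mapsto \big(\sum_{i'<i} m_{i'}\big)+j$, but this does not affect the argument.
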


\begin{proof} For any $x\in \op P(mn)$ and any $m$, let $\gamma_{n}(x)$ denote the corresponding element of $\gamma_{n}(\op P)(m)$. Using the notational conventions of Definition \ref{defn:axial}, define $\widetilde\mu: \gamma_{n}(\op P)\circ \gamma_{n}(\op P) \to \gamma_{n}(\op P)$ by
$$\widetilde \mu\big(\gamma_{n}(p); \gamma_{n}(p_{1}),..., \gamma_{n}(p_{k})\big)=\gamma_{n}\big(\widehat \mu(p;p_{1},...,p_{k})\big)$$
for all $p\in \op P(kn)$ and $p_{i}\in \op P(m_{i}n)$ for $1\leq i\leq k$.   The associativity of $\mu$ and the fact that each $\iota_{k}$ is a monomorphism together imply that $\widetilde\mu$ is associative and therefore endows $\gamma_{n}(\op P)$ with the structure of a nonunital operad.  The naturality of this construction with respect to morphisms in $\cat{Op^{ax_{n}}}$ is obvious.
\end{proof}

\begin{ex}\label{ex:gammaphi} Since the canonical operad morphism $\vp_{m}:\op B_{1}\to \op B_{m}$ preserves $n$-axial structures for all $n$, there are induced morphisms of nonunital operads $\gamma_{n}\vp_{m}:\gamma_{n}\op B_{1}\to \gamma_{n}\op B_{m}$, which also respect the induced symmetric structures.
\end{ex}

\begin{ex} \label{ex:diagonal}  For all $n\geq 1$, there is a curious but useful morphism of symmetric, nonunital operads 
$$\Delta_{n}:\op B_{1}\to \gamma_{n}\op B_{1}$$
defined as follows.   Let $(b_{1},...,b_{k})$ be an ordered sequence of  pairwise disjoint, open intervals contained in $]-1,1[$, with $b_{i}=]x_{i},y_{i}[$. Set
$$\Delta_{n}(b_{1},...,b_{k})=(b_{1,1},..., b_{1,n},...,b_{k,1},...,b_{k,n}),$$
where 
$$b_{i,j}=\Big]\frac 1n (x_{i}+2j-n-1), \frac 1n(y_{i}+2j-n-1)\Big[,$$
for all $1\leq i\leq k$ and $1\leq j\leq n$.  The effect of applying $\Delta_{n}$ is shrink the $k$ intervals $b_{1}$,...,$b_{k}$ by a factor of $n$, then to embed a copy of the shrunken intervals into each of the open subintervals $]\frac{2j-n-1}n, \frac {2j-n}n[$ of $]-1,1[$, for all $1\leq j\leq n$.  It is an easy exercise to check that $\Delta_{n}$ is a morphism of nonunital operads.
\end{ex}

\begin{rmk}  If $\op P$ is an operad with unit element $x_{1}\in \op P$, then $\gamma_{n}(\op P)$ is a unital operad if and only if there is an element $x_{n}\in \op P(n)$ such that $\iota_{1} (x_{n})=(x_{1},...,x_{1})$.  For example, this is not the case if $\op P=\op B_{m}$, the little $m$-balls operad, but it is the case if $\op P=\op A$, the nonsymmetric associative operad.  Indeed, $\gamma_{n}(\op A)\cong \op A$ as unital operads.
\end{rmk}

\subsection{Divided powers and various monoidal structures}
Besides its matrix monoidal structure, the category $\sseq$ admits at least two other closed, symmetric monoidal structures: the \emph{levelwise} structure and the \emph{graded} structure, with respect to both of which the divided powers functor is monoidal, as we show here.   

Let $\op X$ and $\op Y$ be symmetric sequences. Their levelwise monoidal product, $\op X\times \op Y$, is the categorical product, specified by
$$(\op X\times\op Y)(n)=\op X(n)\times \op Y(n),$$
endowed with the diagonal $\mathfrak S_{n}$-action. Their graded monoidal product, $\op X\odot \op Y$, which is given by Day convolution of the monoidal structure on $\mathfrak S$ that is additive on objects, satisfies
$$(\op X\odot \op Y)(n)= \coprod_{l+m=n}\op X(l)\times \op Y(m) \times_{\mathfrak S _{l}\times \mathfrak S _{m}}\mathfrak S_{n},$$
where $\mathfrak S _{l}\times \mathfrak S_{m}$ is seen as a subgroup of $\mathfrak S _{n}$ in the usual way: $\mathfrak S _{l}$ permutes $\{1,..,l\}$ and $\mathfrak S_{m}$ permutes $\{l+1,...,n\}$.

\begin{prop}\label{prop:grmon} For every $n\geq 1$, the divided powers functor $\gamma_{n}^{\mathfrak S}:\sseq\to \sseq$ is 
\begin{enumerate}
\item strongly monoidal with respect to the levelwise monoidal structure, and
\item monoidal with respect to the graded monoidal structure.
\end{enumerate}
\end{prop}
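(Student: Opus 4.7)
The plan is to exploit the concrete formula $\gamma_n^{\mathfrak S}(\op X)(m) = \nu_{m,n}^{*}\op X(mn)$: both monoidal comparison maps will be obtained simply by translating representatives and checking equivariance under the pulled-back group actions.

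For (1), observe that in each arity $m$,
$$\gamma_n^{\mathfrak S}(\op X\times\op Y)(m)=(\op X\times\op Y)(mn)=\op X(mn)\times\op Y(mn)=\gamma_n^{\mathfrak S}(\op X)(m)\times\gamma_n^{\mathfrak S}(\op Y)(m)$$
as simplicial sets. The $\mathfrak S_{m}$-actions on the two sides also agree: the left-hand side is the pullback along $\nu_{m,n}(-,e_n)$ of the diagonal $\mathfrak S_{mn}$-action on $\op X(mn)\times\op Y(mn)$, while the right-hand side is the diagonal of the two pulled-back $\mathfrak S_m$-actions; these two operations plainly commute. The unit for $\times$ is the terminal symmetric sequence, which $\gamma_n^{\mathfrak S}$ obviously preserves, and the associativity, unit, and symmetry coherences are inherited at the level of underlying simplicial sets. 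Hence $\gamma_n^{\mathfrak S}$ is strongly symmetric monoidal for the levelwise structure.

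For (2), I would define the lax monoidal comparison
$$\mu_{\op X,\op Y}:\gamma_n^{\mathfrak S}(\op X)\odot\gamma_n^{\mathfrak S}(\op Y)\to \gamma_n^{\mathfrak S}(\op X\odot\op Y)$$
at arity $k$ by sending a representative $(x,y;\sigma)$, with $i+j=k$, $x\in\op X(in)$, $y\in\op Y(jn)$ and $\sigma\in\mathfrak S_{k}$, to the class of $(x,y;\nu_{k,n}(\sigma,e_n))$ in $(\op X\odot\op Y)(kn)=\gamma_n^{\mathfrak S}(\op X\odot\op Y)(k)$. The one nontrivial point is well-definedness: if the source representative is altered using $(\alpha,\beta)\in\mathfrak S_{i}\times\mathfrak S_{j}$ according to the $\odot$-relation (which acts on $x$ via $\nu_{i,n}(\alpha,e_n)$ and on $y$ via $\nu_{j,n}(\beta,e_n)$), one needs the identity
$$\nu_{i,n}(\alpha,e_n)\oplus\nu_{j,n}(\beta,e_n)=\nu_{k,n}(\alpha\oplus\beta,e_n)\quad\text{in }\mathfrak S_{kn},$$
which follows from a direct unpacking of the identification $\{1,\ldots,kn\}\cong\{1,\ldots,k\}\times\{1,\ldots,n\}$ used to define $\nu_{k,n}$. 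Equivariance of $\mu_{\op X,\op Y}$ under the $\mathfrak S_{k}$-action is then immediate from $\nu_{k,n}(-,e_n)$ being a group homomorphism.

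For the lax monoidality coherences, the unit of $\odot$ is the representable at $0\in\mathbb N$, which $\gamma_n^{\mathfrak S}$ preserves since $kn=0$ forces $k=0$ (as $n\geq 1$), so the unit axiom for $\mu$ is immediate. The associativity pentagon, applied to a representative $(x,y,z;\sigma)$ with $x\in\op X(in)$, $y\in\op Y(jn)$, $z\in\op Z(ln)$ and $\sigma\in\mathfrak S_{i+j+l}$, reduces via the homomorphism property of $\nu$ and the iterated form of the displayed identity above to an equality of classes in $(\op X\odot\op Y\odot\op Z)((i+j+l)n)$. I expect the principal bookkeeping task to be verifying that single block-decomposition identity; once it is in hand, both coherence diagrams become formal manipulations with the equivalence relations defining $\odot$.
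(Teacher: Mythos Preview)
Your proof is correct and follows essentially the same approach as the paper's. For (1) you spell out what the paper summarizes as ``very easy to see,'' and for (2) your explicit comparison map $(x,y;\sigma)\mapsto (x,y;\nu_{k,n}(\sigma,e_n))$ is precisely the ``inclusion'' the paper invokes; the block-sum identity $\nu_{i,n}(\alpha,e_n)\oplus\nu_{j,n}(\beta,e_n)=\nu_{k,n}(\alpha\oplus\beta,e_n)$ you isolate is exactly the verification the paper leaves implicit when asserting the map is ``appropriately equivariant, associative and unital.''
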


\begin{rmk} From this proposition it follows that $\gamma_{n}^{\mathfrak S}$ preserves both levelwise and graded monoids.
\end{rmk}

\begin{proof} (1) It is very easy to see that $\gamma_{n}^{\mathfrak S}(\op X\times \op Y)$ is naturally isomorphic to $\gamma_{n}^{\mathfrak S}(\op X)\times \gamma_{n}^{\mathfrak S}(\op Y)$.

(2) If $\op X$ and $\op Y$ are symmetric sequences, then for all $k\geq 0$,
$$\big(\gamma_{n}^{\mathfrak S}(\op X)\odot \gamma_{n}^{\mathfrak S}(\op Y)\big)(k)=\coprod_{i+j=k}\nu_{i,n}^{*}\op X(in)\times \nu_{j,n}^{*}\op Y(jn)\times _{\mathfrak S_{i}\times \mathfrak S_{j}} \mathfrak S_{k},$$
while
$$\gamma_{n}^{\mathfrak S}(\op X \odot \op Y)(k)=\nu_{k,n}^{*}\big(\coprod_{l+m=kn} \op X(l)\times \op Y(m) \times_{\mathfrak S _{l}\times \mathfrak S_{m}} \mathfrak S_{kn}\big).$$
There is therefore a natural morphism of symmetric sequences
$$\iota:\gamma_{n}^{\mathfrak S}(\op X)\odot \gamma_{n}^{\mathfrak S}(\op Y) \to \gamma_{n}^{\mathfrak S}(\op X \odot \op Y),$$
given simply by inclusion, which can easily be seen to be appropriately equivariant, associative and unital.
\end{proof}

\begin{rmk}  The results in this section can be generalized to totally disconnected groupoids $\G$ with set of objects $\N$ that are endowed with two monoidal structures, one of which is multiplicative and the other additive on objets, and which satisfy a sufficiently strict interchange law.
\end{rmk}

\appendix

\section{Proofs of Proposition \ref{prop:sigma} and Theorem \ref{thm:tau}}

Before proving the results cited above, we introduce an alternative, coordinate-free approach to symmetric sequences and their monoidal structures, to simplify and clarify our arguments.  We are not aware of exactly this approach to monoidal structures on symmetric sequences elsewhere in the literature and think it may prove useful in other contexts as well, as it enables us to avoid explicit mention of symmetric group actions.

\subsection {Coordinate-free operads}

Let $\F$ denote the category of finite sets and all set maps between them. Let $\A$ denote the groupoid of finite set maps, i.e., the objects of $\A$ are morphisms in $\F$, while a morphism in $\A$ from an object $f:T\to S$ to an object $f':T'\to S'$ consists of a pair of bijections $\beta: T\to T'$ and $\alpha: S\to S'$ such that $f'\beta=\alpha f$.  Note that there is an injective homomorphism of groupoids
$$\Sigma \to \A: n \mapsto \big(\{1,...,n\} \to \{1\}\big), \sigma \in \Sigma_{n} \mapsto (\sigma, \id_{\{1\}}),$$
enabling us to view $\Sigma$ as a subgroupoid of $\A$.

The category $\A^{op}$ is symmetric monoidal with respect to $\coprod$, the restriction of the coproduct functor in the category of finite set maps and all pairs of set maps between them to $\A^{op}$. Note that $\coprod$ is not the coproduct in $\A^{op}$ itself:  given maps of finite sets $f$ and $g$,  there are no maps in $\A^{op}$ from $f$ and $g$ to $f\coprod g$.   

A \emph{multiplicative functor} from $\A^{op}$ to $\cat S$ is a functor $\Phi:\A^{op}\to \cat S$ that is monoidal with respect to $\coprod$ on $\A^{op}$ and with respect to cartesian product on $\cat S$.  In particular, if $\Phi$ is a multiplicative functor, then the natural isomorphism $$ \Phi ( -\coprod-) \xrightarrow\cong \Phi (-)\times \Phi (-)$$ induces a natural isomorphism
\begin{equation}\label{eqn:product-formula}
\Phi(f:S\to R) \cong \prod_{r\in R} \Phi \big( f^{-1}(r) \to \{ r\}\big)
\end{equation}
for every map of finite sets $f:S\to R$. 

We let $[\cat A^{{op}}, \S]_{mult}$ denote the category of multiplicative functors and monoidal natural transformations, i.e.,  natural transformations $\tau: \Phi \to \Phi'$ such that 
$$\xymatrix{\Phi (f\coprod g) \ar [d]_{\tau_{f\coprod g}}\ar [r]^(0.4){\cong } & \Phi (f) \times \Phi (g)\ar [d]^{\tau_{f}\times \tau _{g}}\\
\Phi' (f\coprod g)\ar [r]^(0.4){\cong } & \Phi '(f) \times \Phi' (g)
}$$
commutes for all $f,g\in \A$. 

Our interest in the category $[\cat A^{{op}}, \S]_{mult}$ is due to the key lemma below.

\begin{lem}\label{lem:sseq} The category $\sseq$ of symmetric sequences is equivalent to the category $[\cat A^{{op}}, \S]_{mult}$.
\end{lem}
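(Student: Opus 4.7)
The plan is to construct quasi-inverse functors between $\sseq$ and $[\A^{op},\S]_{mult}$, with the product formula (\ref{eqn:product-formula}) as the backbone. In one direction, restriction along the inclusion $\mathfrak S\hookrightarrow\A$ applied to the one-point-target maps yields a functor $\Psi\colon [\A^{op},\S]_{mult}\to\sseq$: given a multiplicative $\Phi$, set $\Psi(\Phi)(n)=\Phi(\underline n\to\{\ast\})$, where $\underline n=\{1,\ldots,n\}$. Every automorphism of the object $\underline n\to\{\ast\}$ in $\A$ has the shape $(\sigma,\id)$ for a unique $\sigma\in\mathfrak S_n$, so contravariance of $\Phi$ automatically equips $\Psi(\Phi)(n)$ with a right $\mathfrak S_n$-action, and monoidal naturality of a transformation $\tau\colon\Phi\to\Phi'$ turns $\Psi$ into a functor.

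In the other direction I build a multiplicative functor $\widehat{\op X}$ from a symmetric sequence $\op X$. First I extend $\op X$ canonically to the groupoid of finite sets and bijections by setting
\[
\op X(T) := \op X(n)\times_{\mathfrak S_n}\operatorname{Bij}(\underline n,T)
\]
for $T$ of cardinality $n\geq 1$, with $\op X(\emptyset):=\op X(0)$. For any finite set map $f\colon S\to R$ then let
\[
\widehat{\op X}(f):=\prod_{r\in R}\op X\bigl(f^{-1}(r)\bigr).
\]
A morphism $(\beta,\alpha)\colon f\to f'$ in $\A$ restricts, for each $r\in R$, to a bijection $f^{-1}(r)\xrightarrow{\cong}(f')^{-1}(\alpha(r))$, and reindexing the product along $\alpha^{-1}$ using these fiber bijections produces the required morphism $\widehat{\op X}(f')\to\widehat{\op X}(f)$ in $\S$. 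Multiplicativity $\widehat{\op X}(f\coprod g)\cong\widehat{\op X}(f)\times\widehat{\op X}(g)$, together with the coherence axioms for a monoidal functor, is immediate from the product-over-the-target definition.

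To finish I check that the two constructions are mutually inverse up to natural isomorphism. Directly from the definitions, $\Psi(\widehat{\op X})(n)=\op X(\underline n)\cong\op X(n)$ as right $\mathfrak S_n$-objects, naturally in $\op X$. Conversely, the product formula (\ref{eqn:product-formula}) supplies, for each multiplicative $\Phi$ and each $f\colon S\to R$, a canonical isomorphism $\Phi(f)\cong\widehat{\Psi(\Phi)}(f)$ built from the iterated monoidality isomorphisms of $\Phi$. The one piece of bookkeeping I expect to be slightly subtle is to verify that this object-wise isomorphism is natural in morphisms of $\A^{op}$, not just on objects: this reduces to the observation that every morphism $(\beta,\alpha)$ in $\A$ is canonically the $\coprod$-coproduct of its restrictions between fibers, after which compatibility of $\Phi$ with the decomposition follows from monoidal coherence together with the fact that $\coprod$ in $\A$ is computed by disjoint union on both source and target sets.
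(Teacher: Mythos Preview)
Your proposal is correct and follows essentially the same approach as the paper: restrict a multiplicative functor to the one-point-target objects to obtain a symmetric sequence, and extend a symmetric sequence by taking a product over the fibers, with the equivalence forced by the product formula~(\ref{eqn:product-formula}). The only cosmetic difference is that the paper fixes once and for all an equivalence $\F\to\operatorname{Sk}(\F)$ and writes $\Phi_{\op X}(f)=\prod_{s}\op X(|f^{-1}(s)|)$, whereas you first extend $\op X$ canonically to all finite sets via the balanced product $\op X(T)=\op X(n)\times_{\mathfrak S_n}\operatorname{Bij}(\underline n,T)$ and then take the fiberwise product; these two devices are interchangeable and yield naturally isomorphic functors.
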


\begin{proof} Define functors
$$\sseq \to [\cat A^{{op}}, \S]_{mult}: \op X \mapsto \Phi_{\op X}$$
and
$$[\cat A^{{op}}, \S]_{mult}\to \sseq: \Phi\mapsto \op X_{\Phi}$$
as follows.  For all $\Phi\in [\A^{{op}}, \S]_{mult}$, the symmetric sequence $\op X_{\Phi}$ is given by restriction of $\Phi$ to $\Sigma^{op}$, seen as a subcategory of $\A^{op}$.

To define the other equivalence, first fix an equivalence of categories 
$$\F \to \operatorname{Sk}(\F): (T\xrightarrow{f} S) \mapsto \big(|T| \xrightarrow {|f|} |S|\big)$$
from $\F$ to a skeleton of $\F$.  This equivalence restricts and corestricts to an equivalence of groupoids between $\F_{bij}$, the category of finite sets and bijections, and $\Sigma$. For any symmetric sequence $\op X$, define $\Phi_{\op X}:\A^{{op}}\to\S$ by
$$\Phi_{\op X} (T\xrightarrow f S)=\prod_{s\in S}\op X\big(|f^{-1}(s) |\big),$$
while for any morphism $(\beta, \alpha): (T\xrightarrow f S) \to (T'\xrightarrow {f'} S')$ in $\A$,
$$\Phi_{\op X}(\beta, \alpha):\prod_{s'\in S'}\op X\Big(|(f')^{-1}(s') |\Big)\to \prod_{s\in S}\op X\Big(|f^{-1}(s) |\Big): (x_{s'})_{s'\in S'}\mapsto \big(x_{\alpha(s)}\cdot |\beta_{s}|\big)_{s\in S},$$
where $\beta_{s}=\beta|_{f^{-1}(s)}:f^{-1}(s)\xrightarrow\cong (f')^{-1}\big(\alpha (s)\big)$. The construction of $\Phi_{\op X}$ is obviously natural in $\op X$. Moreover, the universal property of the categorical product enables us to endow this functor with a natural isomorphism $\Phi_{\op X}(-\coprod -)\xrightarrow \cong \Phi _{\op X}(-)\times \Phi_{\op X}(-)$.

To conclude that these functors form an equivalence of categories, it suffices to observe that, by formula (\ref{eqn:product-formula}), every $\Phi\in [\A^{{op}}, \S]_{mult}$ is determined by its values on set maps of the form $\{1,..,n\}\to \{1\}$.
\end{proof}

We need descriptions of the various monoidal structures on the category of symmetric sequences in terms of this more unfamiliar presentation of the category.

\begin{notn} Let $\cat {Cospan}$ denote the category of $\A^{op}\times\A^{op}$,  the objects of which are cospans, i.e., pairs of set maps $f:T\to S$ and $f':T'\to S$.

Let $\cat{Comp}$ denote the category of $\A^{op}\times\A^{op}$,  the objects of which are composable pairs of set maps $f:S\to R$ and $g:T\to S$.

For all $\Phi, \Psi\in [\A^{{op}}, \S]_{mult}$, let $\Phi\times \Psi: \A^{op}\times\A^{op}\to \S$ denote the external product, i.e., 
$$(\Phi\times \Psi) ( T\xrightarrow f S, T' \xrightarrow{f'} S)= \Phi(f)\times \Psi(f').$$ 
We use the same notation to denote the restriction of this functor to either $\cat{Cospan}$ or $\cat {Comp}$.
\end{notn}

The next two lemmas can be easily proved by explicit computation of left Kan extensions. One must be careful to check that the functors obtained do indeed admit multiplicative structure. The key observation is that if $F:\cat I \to \S$ and $G:\cat J \to \S$ are two functors from small categories into $\S$, then 
$$\operatorname{colim}_{\cat I\times \cat J} (F\times G) \cong \operatorname{colim}_{\cat I}F\times \operatorname{colim}_{\cat J}G,$$
which follows from the fact that colimits in $\S$ are stable under pullback \cite[Section 2.9]{borceux}.

\begin{lem}\label{lem:monprods} Let $\Phi, \Psi\in [\A^{{op}}, \S]_{mult}$.
Under the equivalence of $\sseq$ and $[\cat A^{{op}}, \S]_{mult}$ of Lemma \ref{lem:sseq},
\begin{enumerate}
\item the left Kan extension of $\Phi\times \Psi$ along the disjoint union functor
$$ \cat {Cospan}\to \A^{op}:( T\xrightarrow f S, T' \xrightarrow{f'} S)\mapsto (T\coprod T' \xrightarrow{f+f'} S)$$
corresponds to the graded tensor product of symmetric sequences, and
\smallskip

\item the left Kan extension of $\Phi\times \Psi$ along the fibered product functor
$$  \cat {Cospan}\to \A^{op}:( T\xrightarrow f S, T' \xrightarrow{f'} S)\mapsto (T\times_{S} T' \xrightarrow{f\times_{S}f'} S)$$
corresponds to the matrix monoidal product of symmetric sequences.
\end{enumerate}
\end{lem}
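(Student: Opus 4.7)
The plan is to compute both left Kan extensions pointwise via the standard colimit formula and to identify the resulting objects with the formulas for $\odot$ and $\square$ given just before the lemma statement. First I would reduce to evaluating the extended functors at maps of the form $T \to \{*\}$: by formula (\ref{eqn:product-formula}) a multiplicative functor is determined up to isomorphism by its restriction to such maps, so once multiplicativity of the extension has been established, its values elsewhere are forced. Multiplicativity itself follows from the fact, highlighted in the statement, that colimits in $\S$ commute with finite products; for any $(f,g)$ the comma category of the Kan-extending functor over $f \coprod g$ factors as a product of the comma categories over $f$ and over $g$, and the colimit of $\Phi\times \Psi$ over a product factors accordingly.

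Next, for a map $f: T \to \{*\}$ with $|T| = n$, in the graded case the comma category consists of triples $(T_1, T_2, \beta)$ with $\beta : T \xrightarrow{\sim} T_1 \coprod T_2$, morphisms being pairs of bijections of $T_1$ and $T_2$ compatible with $\beta$. Evaluating $\Phi_{\op X}\times \Phi_{\op Y}$ gives $\op X(|T_1|) \times \op Y(|T_2|)$, and passing to the colimit assembles these into
$$\coprod_{l+m=n} \op X(l)\times \op Y(m)\times_{\mathfrak S_l\times\mathfrak S_m} \mathfrak S_n,$$
with $\mathfrak S_l\times \mathfrak S_m \hookrightarrow \mathfrak S_n$ the block inclusion, which is precisely $(\op X\odot \op Y)(n)$. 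In the matrix case, the constraint that $f$ has codomain $\{*\}$ forces the common codomain $S$ in $\cat{Cospan}$ to be a singleton and hence $T_1\times_S T_2 = T_1 \times T_2$; the same procedure yields
$$\coprod_{lm=n} \op X(l)\times \op Y(m)\times_{\mathfrak S_l\times\mathfrak S_m} \mathfrak S_n,$$
where the homomorphism $\mathfrak S_l\times \mathfrak S_m \to \mathfrak S_n$ is now the one recorded in Examples \ref{exs}, namely $\nu_{l,m}$, giving $(\op X\square \op Y)(n)$.

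The main obstacle I expect is the combinatorial bookkeeping: identifying the automorphism group of a chosen object in each comma category with the correct subgroup $\mathfrak S_l \times \mathfrak S_m \subset \mathfrak S_n$, and verifying that the residual $\mathfrak S_n$-action assembled by the colimit is the expected one. The entire difference between parts (1) and (2) is encoded in this comparison of two different group embeddings, whereas the multiplicativity of the Kan extension itself --- which is essentially the statement that Day convolution is monoidal --- is standard once one invokes the compatibility of colimits with products in $\S$.
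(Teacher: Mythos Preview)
Your proposal is correct and follows essentially the same approach as the paper: the paper's proof is only a one-paragraph sketch indicating that one should compute the left Kan extensions explicitly and that multiplicativity follows from the stability of colimits in $\S$ under pullback (equivalently, $\operatorname{colim}_{\cat I\times\cat J}(F\times G)\cong \operatorname{colim}_{\cat I}F\times\operatorname{colim}_{\cat J}G$). You have simply fleshed out that sketch by reducing to maps with singleton target via the product formula~(\ref{eqn:product-formula}) and then unwinding the comma-category colimit in each case, which is exactly what the paper leaves to the reader.
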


Associativity and symmetry of these two monoidal structures on symmetric sequences therefore follow immediately from the associativity and symmetry of the disjoint union and the fiber product constructions.  We have a similar description of the nonsymmetric, but still associative, composition product of symmetric sequences, which, naturally enough, arises from composition of set maps.

\begin{lem}\label{lem:comp-prod} Let $\Phi,\Psi\in [\A^{{op}}, \S]_{mult}$.  
Under the identification of $\sseq$ and $[\cat A^{{op}}, \S]_{mult}$ of Lemma \ref{lem:sseq},
 the left Kan extension of $\Phi\times \Psi$ along the composition functor
$$ \operatorname{comp}: \cat {Comp}\to \A^{op}:( S\xrightarrow f R, T \xrightarrow{g} S))\mapsto (T \xrightarrow{gf} R)$$
corresponds to the composition product of symmetric sequences.
\end{lem}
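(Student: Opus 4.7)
My plan is to compute the left Kan extension explicitly and match it, under the equivalence of Lemma~\ref{lem:sseq}, with the usual description of $\op X\circ \op Y$. Evaluating the Kan extension pointwise at $h: T\to R$ in $\A^{op}$ yields the colimit of $\Phi_{\op X}(f)\times \Phi_{\op Y}(g)$ over the slice of $\operatorname{comp}$ at $h$. Because $\A^{op}$ is a groupoid, this slice is equivalent to the groupoid whose objects are factorizations $T\xrightarrow{g} S\xrightarrow{f} R$ of $h$ through a finite set $S$, and whose morphisms are bijections of $S$ commuting with both $f$ and $g$.

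Next I would reduce to the case $R=\{\ast\}$. A factorization of $h$ is exactly a tuple, indexed by $r\in R$, of factorizations of the fiber maps $h^{-1}(r)\to \{r\}$, and by formula~\eqref{eqn:product-formula} the integrand $\Phi_{\op X}(f)\times\Phi_{\op Y}(g)$ splits as a product over $r\in R$ accordingly. Since colimits of external products over products of categories decompose, the Kan extension at $h$ is naturally isomorphic to $\prod_{r\in R}\operatorname{Lan}_{\operatorname{comp}}(\Phi_{\op X}\times\Phi_{\op Y})(h^{-1}(r)\to\{r\})$. This simultaneously carries out the reduction and confirms that the resulting functor on $\A^{op}$ is multiplicative.

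It therefore suffices to consider $h: T\to\{\ast\}$ with $n=|T|$. Choosing a skeleton of $\A^{op}$ and grouping factorizations by the cardinality $k=|S|$, the colimit takes the form
\[
\coprod_{k\geq 0}\Bigl(\op X(k)\times \coprod_{n_1+\cdots+n_k=n}\op Y(n_1)\times\cdots\times \op Y(n_k)\times \mathfrak S_n\Bigr)\big/\sim,
\]
where $\sim$ is generated by the automorphisms of factorizations. I would identify $\sim$ with the two relations listed in the introductory Notation: relabelings of $S$ permuting its $k$ elements act by the standard $\mathfrak S_k$-action on $\op X(k)$ together with the block-permutation of the $\op Y$-factors, while bijections of $S$ fixing each $s\in S$ but permuting the elements of each fiber $g^{-1}(s)$ act block-diagonally as $\mathfrak S_{n_1}\times\cdots\times\mathfrak S_{n_k}$ on $\op Y(n_1)\times\cdots\times\op Y(n_k)$. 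This matches precisely the presentation of $(\op X\circ \op Y)(n)$.

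The main obstacle will be this last bookkeeping step: verifying that the automorphism groupoid of factorizations produces neither fewer nor more identifications than the standard quotient formula, and in particular that the copy of $\mathfrak S_n$ appearing after the choice of skeleton correctly intertwines with the internal $\op X$- and $\op Y$-actions so that the induced $\Sigma^{op}$-action, when pulled back through Lemma~\ref{lem:sseq}, agrees on the nose with the one defining the composition product.
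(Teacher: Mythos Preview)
Your approach matches what the paper sketches: an explicit computation of the Kan extension, reduction to $R=\{\ast\}$ via the observation that $\operatorname{colim}_{\cat I\times\cat J}(F\times G)\cong\operatorname{colim}_{\cat I}F\times\operatorname{colim}_{\cat J}G$ in $\S$ (the paper attributes this to stability of colimits under pullback), and identification with the standard formula for $\op X\circ\op Y$.

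There is one slip to correct in your final paragraph. You say the $\prod_i\mathfrak S_{n_i}$-relation comes from ``bijections of $S$ fixing each $s\in S$ but permuting the elements of each fiber $g^{-1}(s)$''---but a bijection of $S$ fixing every $s$ is the identity, and the fibers lie in $T$, not $S$. In the slice groupoid you set up in your first paragraph (factorizations of a \emph{fixed} $h$, with morphisms bijections of $S$ only), no such morphisms exist, and the colimit over that groupoid carries no free $\mathfrak S_n$ factor either: it is simply
\[
\coprod_{k\geq 0}\Bigl(\coprod_{g:\{1,\dots,n\}\to\{1,\dots,k\}}\op X(k)\times\prod_{i}\op Y(|g^{-1}(i)|)\Bigr)\Big/\mathfrak S_k,
\]
indexed directly by maps $g$. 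This \emph{is} isomorphic to $(\op X\circ\op Y)(n)$, but the $\prod_i\mathfrak S_{n_i}$-identifications enter only when you reparametrize the set of such $g$'s by pairs $(\vec n,\tau)$ with $\tau\in\mathfrak S_n$: they record the redundancy in that parametrization (and the comparison of the fixed identifications $g^{-1}(i)\cong\{1,\dots,n_i\}$), not automorphisms of factorizations. Once you keep straight which indexing category you are computing over, the bookkeeping you flag as the ``main obstacle'' goes through cleanly.
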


\begin{rmk}\label{rmk:operad-cf} Let $\Upsilon: \A^{op}\to \S$ denote the functor specified by $\Upsilon (f)=*$ if $f$ is a bijection and $\Upsilon (f)=\emptyset $ else.  As a consequence of Lemma \ref{lem:comp-prod}, an operad corresponds under the equivalence of Lemma \ref{lem:sseq} to a functor $\Phi\in [\A^{{op}}, \S]_{mult}$ together with natural transformations
$$\mu: \Phi \times \Phi  \to  \Phi  \circ \operatorname{comp}: \cat{Comp} \to \S$$
and 
$$\eta: \Upsilon \to \Phi,$$
satisfying the obvious associativity, unitality and naturality conditions.
\end{rmk}

\subsection{Applying coordinate-free methods}

Thanks to Lemmas \ref{lem:monprods} and \ref{lem:comp-prod}, we can now describe the source and target of the natural transformation required in Proposition \ref{prop:sigma} as left Kan extensions.    The description is formulated in terms of the following pair of categories and a certain functor between them.

Let $\cat{Cospan}(\cat {Comp})$ denote the subcategory of $\cat {Comp}\times \cat {Comp}$ the objects of which are pairs of pairs $\big( (f,g), (f',g')\big)$ such that $f$ and $f'$ have the same codomain.  Let $\cat{Comp}(\cat{Cospan})$ denote the subcategory of $\cat {Cospan}\times \cat {Cospan}$ the objects of which are pairs of pairs $\big( (f,f'), (g,g')\big)$ such that  $f:S \to R$, $f': S'\to R$, $g: T\to S\times _{R}S'$ and $g': T'\to S\times _{R}S'$, whence the composite
$$T\times _{S\times_{R}S'} T' =T\times _{R}T'\xrightarrow {g\times _{R}g'} S\times_{R} S'\xrightarrow {f\times _{R}f'} R$$
makes sense.  These two categories are linked by a functor 
$$\omega:\cat {Cospan}(\cat {Comp}) \to \cat {Comp}(\cat {Cospan})$$ 
defined on objects by
$$\omega \big( (S\xrightarrow f R,T\xrightarrow g S), (S'\xrightarrow{f'} R, T'\xrightarrow {g'} S')\big) = \big( (f,f'), (g\times _{R}{S'}, {S}\times_{R}g')\big).$$
Observe that $\omega$ is associative, in the sense that
\begin{align*}
\omega(\id \times \omega)&\big( (f,g), (f',g'), (f'', g'')\big)\\
 &=\Big((f,f',f''), (g\times _{R}S'\times _{R'}S'', S\times _{R}g'\times _{R'}S'', \times _{R}S'\times _{R'}g'')\Big)\\
&=\omega(\omega \times \id)\big( (f,g), (f',g'), (f'', g'')\big).
\end{align*}
Let $\omega ^{(2)}=\omega(\id \times \omega)=\omega(\omega \times \id).$
\smallskip

\begin{cor}\label{cor:comp-cospan}  Let $\Phi,\Phi', \Psi, \Psi'\in [\A^{{op}}, \S]_{mult}$. Under the identification of $\sseq$ and $[\cat A^{{op}}, \S]_{mult}$ of Lemma \ref{lem:sseq},
\begin{enumerate}
\item   the left Kan extension of 
{\small $$\cat{Comp}(\cat {Cospan}) \xrightarrow{\Phi\times \Phi'\times \Psi \times \Psi'} \S:\big ((f,f'),(g,g')\big)\mapsto \Phi (f)\times \Phi' (f')\times \Psi (g)\times \Psi' (g')$$}
along the functor
$$ \cat{Comp}(\cat{Cospan})\xrightarrow\vp \A^{op}: \big((f,f'), (g,g')\big) \mapsto (g\times _{R}g')\circ (f\times_{R}f'),$$
where $f:S \to R$, $f': S'\to R$, $g: T\to S\times _{R}S'$ and $g': T'\to S\times _{R}S'$, corresponds to $(\op X_{\Phi}\square \op X_{\Phi'})\circ (\op X_{\Psi}\square \op X_{\Psi'})$, and
\medskip

\item  the left Kan extension of 
{\small $$ \cat{Cospan}(\cat {Comp}) \xrightarrow{\Phi\times \Psi\times \Phi' \times \Psi'} \S:(f,g,f',g')\mapsto \Phi (f)\times \Psi (g)\times \Phi' (f')\times \Psi' (g')$$}
along the functor
$$ \cat{Cospan}(\cat{Comp})\xrightarrow{\vp\omega} \A^{op}: \big((f,g), (f',g')\big) \mapsto (gf)\times _{R} (g'f'),$$
where $R$ is the codomain of $f$ and $f'$, corresponds to $(\op X_{\Phi}\circ \op X_{\Psi})\square (\op X_{\Phi'}\circ \op X_{\Psi'})$.
\end{enumerate}
\end{cor}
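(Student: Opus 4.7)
The plan is to derive both assertions by iterating left Kan extensions, relying on the standard fact that for composable functors $\cat I \xrightarrow{F} \cat J \xrightarrow{G} \cat K$ and $H:\cat I\to \S$, one has $\operatorname{Lan}_G\operatorname{Lan}_F H \cong \operatorname{Lan}_{GF} H$. Combined with Lemmas \ref{lem:monprods}(2) and \ref{lem:comp-prod}, this reduces each claim to verifying that a particular composite of indexing functors agrees with the one appearing in the statement.

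For part (1), I would first apply Lemma \ref{lem:monprods}(2) to both matrix products: $\op X_{\Phi}\square\op X_{\Phi'}$ is the left Kan extension of $\Phi\times \Phi'$ along the fiber-product functor $\cat{Cospan}\to \A^{op}$, and analogously for $\op X_{\Psi}\square \op X_{\Psi'}$. Applying Lemma \ref{lem:comp-prod} then presents their composition product as a further left Kan extension along $\operatorname{comp}$. Assembled together, the composite indexing functor sends $((f,f'),(g,g'))\in \cat{Comp}(\cat{Cospan})$ to the composable pair $(f\times_R f',\, g\times_R g')$ and then to $(g\times_R g')\circ(f\times_R f')$, which is precisely $\vp$. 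The external product $\Phi\times\Phi'\times\Psi\times\Psi'$ is what the inner Kan extensions act on, so Fubini finishes this part.

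For part (2), I would proceed symmetrically: Lemma \ref{lem:comp-prod} expresses $\op X_{\Phi}\circ \op X_{\Psi}$ and $\op X_{\Phi'}\circ \op X_{\Psi'}$ as Kan extensions of $\Phi\times\Psi$ and $\Phi'\times\Psi'$ along $\operatorname{comp}$, and Lemma \ref{lem:monprods}(2) then expresses their matrix product as a further Kan extension along fiber product. The composite indexing functor sends $((f,g),(f',g'))\in \cat{Cospan}(\cat{Comp})$ to the cospan $(gf,\,g'f')$ and thence to $(gf)\times_R (g'f')$. It remains to check that this functor coincides with $\vp\omega$: unwinding the definitions, $\vp\omega((f,g),(f',g'))$ equals $\bigl((g\times_R S')\times_{S\times_R S'}(S\times_R g')\bigr)\circ(f\times_R f')$, and the standard pullback identity $(T\times_R S')\times_{S\times_R S'}(S\times_R T') \cong T\times_R T'$ together with the compatible factorization of the map to $R$ identifies this with $(gf)\times_R(g'f')$ both on objects and on bijective morphisms.

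The main technical obstacle is ensuring that each intermediate Kan extension produces a \emph{multiplicative} functor $\A^{op}\to \S$, so that the equivalence of Lemma \ref{lem:sseq} applies at every stage. This rests on the fact that colimits in $\S$ commute with finite products, which was the key observation in the proof of Lemmas \ref{lem:monprods} and \ref{lem:comp-prod}, together with the compatibility of $\vp$, $\omega$, and $\vp\omega$ with the disjoint-union monoidal structures on their domains and codomains (fiber products and compositions both distribute over disjoint unions taken in the appropriate variable). Once this compatibility is verified—a routine check on objects and bijections—the iterated Kan extensions automatically factor through $[\A^{op},\S]_{mult}$ and the corollary follows from the two lemmas.
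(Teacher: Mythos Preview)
Your proposal is correct and follows exactly the intended route: the paper states this as a corollary of Lemmas~\ref{lem:monprods} and~\ref{lem:comp-prod} without further proof, and your argument---iterating left Kan extensions via $\operatorname{Lan}_G\operatorname{Lan}_F\cong\operatorname{Lan}_{GF}$, using that products in $\S$ commute with colimits to pass external products through Kan extensions, and then identifying the composite indexing functors---is precisely the derivation the paper leaves implicit. Your verification of the pullback identity $(T\times_R S')\times_{S\times_R S'}(S\times_R T')\cong T\times_R T'$ in part~(2) and your remark on multiplicativity are exactly the points that need checking.
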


The domain and codomain of the natural transformation of Proposition \ref{prop:sigma}
can therefore be computed as left Kan extensions in a (noncommuting) diagram of the following form.
$$\xymatrix{\cat{Cospan}(\cat {Comp})\ar [d]_{\omega} \ar [rrr]^{\Phi\times \Psi\times \Phi'\times \Psi'}&&&\S\\
\cat {Comp}(\cat{Cospan})\ar [d]_{\vp} \ar [urrr]_{\qquad\Phi \times \Phi'\times \Psi\times \Psi'}\\
\A^{op}}$$
To prove the existence of the desired natural transformation
$$\sigma: (-\circ -)\square (-\circ -) \to \big((- \square -) \circ (- \square -)\big)(23) : \sseq ^{\times 4}\to \sseq,$$
where $(23)$ denotes the functor permuting the middle two copies of $\sseq$, it therefore suffices to construct a family of natural transformations $\mathfrak t$ from $\Phi\times \Psi\times \Phi' \times \Psi'$ to $(\Phi \times \Phi'\times \Psi\times \Psi')\omega$, which are themselves natural in $\Phi,\Phi', \Psi$, and $\Psi'$.    To establish the required associativity of $\sigma$, we must then prove that
\begin{equation} \label{eqn:t}
\xymatrix{\Phi\times \Psi\times \Phi'\times \Psi'\times \Phi''\times \Psi'' \ar [d]_{\id\times \mathfrak t}\ar [rr]^{\mathfrak t \times \id}&&(\Phi\times \Phi'\times \Psi\times \Psi')\omega\times \Phi''\times \Psi'' \ar [d]^{ \mathfrak t*(\omega \times \id)}\\
\Phi\times \Psi\times (\Phi'\times \Phi''\times \Psi'\times \Psi'')\omega\ar [rr]^{\mathfrak t*(\id\times \omega)}&&( \Phi\times \Phi'\times \Phi''\times \Psi \times \Psi'\times \Psi'')\omega^{(2)}}
\end{equation}
commutes, where $*$ denotes whiskering.

Before defining and studying $\mathfrak t$, we need one more important natural map.

\begin{notn}\label{notn:delta} Let $\Phi\in [\A^{{op}}, \S]_{mult}$.  Consider the following pullback diagram of set maps.
$$\xymatrix{T\times _{R}S'\ar[d]\ar [rr]^{g\times _{R}S'}&&S\times _{R}S'\ar [d]\ar [rr]&&S'\ar[d]_{f'}\\
T\ar [rr]^{g}&&S\ar [rr]^{f}&&R}
$$
There is a map from $\delta_{\Phi}: \Phi(g)\to \Phi(g\times _{R}S')$, which is essentially an iterated diagonal map and which is natural in $f$, $g$ and $f'$, as well as $\Phi$.  We can construct this map as follows.
Since $\Phi$ is multiplicative, 
$$\Phi (g)\cong \prod _{s} \Phi \Big( g^{-1}(s)\to \{s\}\Big)$$
and
\begin{align*}
\Phi (g\times _{R}S')&\cong \prod _{(s,s')\in S\times_{R}S' } \Phi \Big( (g\times _{R}S')^{-1}(s,s')\to \{(s,s')\}\Big)\\
&\cong \prod _{(s,s')\in S\times_{R}S' } \Phi \Big( g^{-1}(s)\times \{s'\}\to \{(s,s')\}\Big).
\end{align*}
For every $(s,s')\in S\times _{R}S'$, there is a morphism in $\cat A$
$$(\beta_{s,s'}, \alpha_{s,s'}): \Big( g^{-1}(s)\to \{s\}\Big) \to  \Big( g^{-1}(s)\times \{s'\}\to \{(s,s')\}\Big)$$
given by $\beta _{s,s'}(x)=(x,s')$ for all $x\in  g^{-1}(s)$.

Let 
$$\iota^{\Phi}_{s,s'}=\Phi (\beta_{s,s'}^{-1}, \alpha_{s,s'}^{-1}):  \Phi \Big( g^{-1}(s)\to \{s\}\Big) \to  \Phi \Big( g^{-1}(s)\times \{s'\}\to \{(s,s')\}\Big).$$
Using the decomposition into products, we can now define $\delta_{\Phi}: \Phi(g)\to \Phi(g\times _{R}S')$ by
$$\delta_{\Phi} \big((y_{s})_{s\in S}\big) = \big(\iota^{\Phi}_{s,s'}(y_{s})\big)_{(s,s')\in S\times _{R}S'}.$$
The naturality of $\delta_{\Phi}$ is clear.
\end{notn}

\begin{proof}[Proof of Proposition \ref{prop:sigma}]  Let $\Phi,\Phi', \Psi, \Psi'\in [\A^{{op}}, \S]_{mult}$.
There is a natural transformation 
$$\mathfrak t: \Phi\times \Psi\times \Phi' \times \Psi'\to (\Phi \times \Phi'\times \Psi\times \Psi')\omega$$
defined componentwise as follows.  Given $\big( (S\xrightarrow f R,T\xrightarrow g S), (S'\xrightarrow{f'} R, T'\xrightarrow {g'} S')\big)$, define
$$
\Phi(f)\times \Psi(g)\times \Phi'(f')\times \Psi'(g')\xrightarrow{\mathfrak t}\Phi(f)\times \Phi(f')\times \Psi(g\times_{R}S')\times \Psi'(S\times_{R}g')$$
by
$$\mathfrak t(x,y,x',y'')= \big(x,x',\delta_{\Phi'}(y), \delta_{\Psi'}(y')\big).$$ 
An easy computation, based on the coassociativity of the diagonal map of any simplicial set, shows that diagram (\ref{eqn:t}) commutes, as desired. 
\end{proof}

Now that we have chosen a specific natural transformation $\sigma$, we are ready to show that it is appropriately compatible with operad multiplication, as formulated in Theorem \ref{thm:tau}.

\begin{proof}[Proof of Theorem \ref{thm:tau}] For any pair of operads $\op P, \op Q$, let $\pi: \op P\square \op Q\to \op P\otimes \op Q$ denote  the natural composite
$$\op P\square \op Q\xrightarrow{\iota} \op P\circ \op Q\hookrightarrow \op P \coprod \op Q \xrightarrow\psi \op P\otimes \op Q,$$
where $\iota (p,q)=(p;q,...,q)$, the second morphism is the obvious inclusion into the coproduct of operads, and $\psi$ denotes the quotient map, so that $\pi(p,q)=p\otimes q$.

Define $\upsilon $ to be the composite
{\small$$(\op P\circ \op X\circ \op Q) \square (\op P'\circ \op X'\circ \op Q')\xrightarrow{(\sigma\circ \id)\sigma}(\op P\square \op P')\circ (\op X \square \op X') \circ (\op Q \square \op Q')\xrightarrow{\pi\circ \id \circ \pi} (\op P \otimes \op P') \circ (\op X \square \op X') \circ (\op Q\otimes \op Q').$$}
Proposition \ref{prop:sigma} implies that diagram (\ref{eqn:tau}) commutes if 
\begin{equation}\label{eqn:identity}\mu_{\op P\otimes \op Q}(\pi^{\circ 2})\sigma= \pi(\mu_{\op P}\square \mu_{\op Q}): \op P^{\circ 2}\square \op Q^{\circ 2}\to \op P\otimes \op Q
\end{equation}
for any operads $\op P$ and $\op Q$, so we now check this identity, in the coordinate-free framework set up above.
We begin by analyzing more closely the two sides of equation (\ref{eqn:identity}).

Let $\widetilde \sigma: (\op P\circ\op P)\square (\op Q \circ \op Q) \to \op P\circ (\op P\square \op Q) \circ \op Q$ denote the natural morphism of symmetric sequences specified by
$$\widetilde\sigma\big( (p; p_{1},...,p_{k}), (q;q_{1},..., q_{l})\big)= \big(p; (p_{1},q),..., (p_{k},q); q_{1},...,q_{l},..., q_{1},...,q_{l}\big),$$
where $p\in \op P(k)$, $q\in \op Q(l)$ and $p_{i}\in \op P(m_{i})$, $q_{j}\in \op Q(n_{j})$ for all $i$, $j$.  The associativity of the multiplication on $\op P\otimes \op Q$ and the fact that the inclusions $\op P\to \op P\otimes \op Q$ and $\op Q\to \op P\otimes \op Q$ are operad maps together imply easily that $\pi(\mu_{\op P}\square \mu_{\op Q})$ is equal to the composite
$$(\op P\circ \op P)\square (\op Q\circ \op Q) \xrightarrow {\widetilde \sigma} \op P\circ (\op P\square \op Q) \circ \op Q\xrightarrow {\id \circ \pi\circ \id} \op P\circ (\op P\otimes \op Q) \circ \op Q \xrightarrow{\lambda} \op P\otimes \op Q,$$
where the morphism $\lambda$ is constructed from the left action of $\op P$ and the right action of $\op Q$ on $\op P\otimes \op Q$.  It follows from the commutativity relation imposed on the Boardman-Vogt tensor product that 
$$\xymatrix{ \op P\circ (\op P\square \op Q) \circ \op Q\ar [dr]^{\quad \lambda(\id\circ \pi\circ\id) }\ar [dd]_{\id\circ sw\circ \id}^{\cong}\\
&\op P\otimes \op Q\\
 \op P\circ (\op Q\square \op P) \circ \op Q\ar [ur]_{\quad\lambda(\id\circ \pi\circ\id) }}$$
 commutes, where $sw:\op P\square\op Q \to \op Q\square \op P$ denotes the symmetry transformation of the matrix monoidal product, whence
 $$\pi(\mu_{\op P}\square \mu_{\op Q})=\lambda(\id\circ \pi\circ\id)(\id \circ {sw}\circ \id)\widetilde\sigma.$$
 
 Observe now that the diagram
 $$\xymatrix{ \op P\circ (\op Q\square \op P) \circ \op Q\ar [d]_{\id \circ \iota \circ\id }\ar[r]^(0.6){\lambda (\id\circ \pi\circ \id)}& \op P\otimes \op Q\\
 \op P\circ \op Q\circ \op P\circ \op Q\ar [r]^(0.45){\psi\circ \psi}&(\op P\otimes \op Q) \circ (\op P\otimes \op Q)\ar [u]_{\mu_{\op P\otimes \op Q}}}$$
 commutes, since the quotient map $\psi: \op P\coprod \op Q \to \op P\otimes \op Q$ is a morphism of operads, and the multiplication on $\op P\otimes \op Q$ is associative.  Consequently, 
 $$\pi(\mu_{\op P}\square \mu_{\op Q})=\mu_{\op P\otimes \op Q}(\psi\circ \psi)(\id\circ\iota\circ\id)(\id\circ {sw}\circ \id)\widetilde\sigma.$$
If we show moreover that 
\begin{equation}\label{eqn:final}(\iota \circ \iota)\sigma=(\id\circ \iota\circ \id)(\id\circ {sw}\circ \id)\widetilde\sigma,
\end{equation}
then we can conclude, since in that case
\begin{align*}
\mu_{\op P\otimes \op Q}(\pi\circ \pi)\sigma&=\mu_{\op P\otimes \op Q}(\psi\circ \psi)(\iota \circ \iota)\sigma\\
&=\mu_{\op P\otimes \op Q}(\psi\circ \psi)(\id\circ \iota\circ \id)(\id\circ {sw}\circ \id)\widetilde\sigma\\
&=\pi(\mu_{\op P}\square \mu_{\op Q}),
\end{align*}
as desired.  It remains thus to verify equation (\ref{eqn:final}).

We can translate equation (\ref{eqn:final}) into our coordinate-free framework as follows.   Let $\cat {Comp}(\cat{Comp})$ denote the subcategory of $\cat {Comp}\times \cat{Comp}$ the objects of which are 4-tuples $(f,g, h,k)$ of  composable maps.  Let $\cat A^{op}\vee \cat{Comp} \vee \cat A^{op}$ denote the subcategory of $(\cat A^{op})^{\times 4}$ the objects of which are sequences of set maps of the form
$$(S\xrightarrow f R,(T\xrightarrow g S,T'\xrightarrow {g'} S),U\xrightarrow k T\times_{S}T').$$
In the remainder of this proof we work with the following functors, in addition to those already defined:
\begin{itemize}
\item $\theta: \cat {Cospan}(\cat {Comp}) \to  \cat A^{op}\vee \cat{Comp} \vee \cat A^{op}$
specified by
$$\theta\Big( (S\xrightarrow f R, T\xrightarrow g S), (S'\xrightarrow {f'} R, T'\xrightarrow {g'} S')\Big)=\big( f, (S\times_{R}f', g), T\times _{R}g'\big),$$
\smallskip

\item $\chi:\cat A^{op}\vee \cat{Comp} \vee \cat A^{op}\to :\cat A^{op}\vee \cat{Comp} \vee \cat A^{op}$
specified by
$$\chi\Big(S\xrightarrow f R,(T\xrightarrow g S,T'\xrightarrow {g'}S),U\xrightarrow k T\times_{S}T'\Big)=\big(f,(g',g),h\big)$$
\smallskip

\item $\zeta:\cat A^{op}\vee \cat{Comp} \vee \cat A^{op} \to \cat {Comp}(\cat {Comp})$
specified by
$$\zeta\Big(S\xrightarrow f R,(T\xrightarrow g S,T'\xrightarrow {g'}S),U\xrightarrow h T\times_{S}T'\Big)=(f,g,T\times _{S}g', h),$$
\smallskip

\item $\kappa: \cat {Comp}(\cat{Cospan})\to \cat {Comp}(\cat{Comp})$ specified by
{\small $$\kappa\big( (S\xrightarrow f R, S'\xrightarrow{f'} R), (T\xrightarrow{g} S\times _{R}S', T'\xrightarrow{g'} S\times _{R}S')\big)=\big( (f, S\times_{R}f'), (g, T\times_{S\times_{R}S'}g')\big)$$}

\noindent and
\smallskip

\item $\operatorname{comp}^{(2)}:\cat {Comp}(\cat {Comp})\to  \cat A^{op}: (f,g,h,k)\mapsto khgf.$
\end{itemize}
\medskip

Recall the definition of the functor $\omega : \cat {Cospan}(\cat {Comp}) \to \cat {Comp}(\cat {Cospan})$ from the beginning of this section. The diagram of functors
\begin{equation}\label{eqn:functor-diagram}\xymatrix{\cat{Cospan}(\cat {Comp})\ar [d]_{\omega}\ar [rr]^{\theta}&&\cat A^{op}\vee \cat{Comp} \vee \cat A^{op}\ar [d]^{\chi}\\
\cat {Comp}(\cat {Cospan})\ar [d]_{\kappa}&&\cat A^{op}\vee \cat{Comp} \vee \cat A^{op}\ar [d]^{\zeta}\\
\cat {Comp}(\cat {Comp})\ar[dr]_{\operatorname{comp}^{(2)}}&& \cat {Comp}(\cat {Comp})\ar[dl]^{\operatorname{comp}^{(2)}}\\
&\cat A^{op}}
\end{equation}
commutes. To see this, fix any object $\Big( (S\xrightarrow f R, T\xrightarrow g S), (S'\xrightarrow {f'} R, T'\xrightarrow {g'} S')\Big)$ in $\cat {Cospan}(\cat {Comp})$, and consider the associated iterated pullback diagram.
$$\xymatrix{T\times _{R}T'\ar [d]^{T\times _{R}g'}\ar [rr]^{g\times_{R}T'}\ar[d]&&S\times_{R} T' \ar [rr]^(0.6){f\times_{R}T'}\ar [d]^{S\times_{R}g'}&&T'\ar [d]^{g'}\\
T\times _{R}S' \ar[d]^{T\times _{R}f'}\ar[rr]^{g\times_{R}S'}&& S\times _{R}S' \ar [d]^{S\times_{R}f'}\ar[rr]^(0.6){f\times_{R}S'}&&S'\ar [d]^{f'}\\
T\ar [rr]^g&&S\ar [rr]^f&&R}$$
The commutativity of this diagram implies that
\begin{align*}
\operatorname{comp}^{(2)}\kappa\omega\Big((f,g), (f',g')\Big)&=\operatorname{comp}^{(2)}\kappa\Big( (f,f'), (g\times _{R}S', S\times _{R}g')\Big)\\
&=\operatorname{comp}^{(2)}\Big(f, S\times _{R}f', g\times _{R}S', T\times _{R}g'\Big)\\
&=gf\times_{R}g'f'\\
&=\operatorname{comp}^{(2)}\Big(f, g, T\times_{R}f', T\times _{R}g'\Big)\\
&=\operatorname{comp}^{(2)}\zeta\Big( f, (g,S\times_{R}f'), T\times _{R}g'\Big)\\
&=\operatorname{comp}^{(2)}\zeta\chi \Big( f, (S\times_{R}f', g), T\times _{R}g'\Big)\\
&=\operatorname{comp}^{(2)}\zeta\chi\theta\Big((f,g), (f',g')\Big).
\end{align*}
We can also check easily that $\vp =\operatorname{comp}^{(2)}\kappa$ (cf. Corollary \ref{cor:comp-cospan}). 

The relevance of diagram (\ref{eqn:functor-diagram}) for verifying (\ref{eqn:final}) is a consequence of the following observations, which are based on Lemmas \ref{lem:monprods} and \ref{lem:comp-prod}. Under the identification of $\sseq$ and $[\cat A^{{op}}, \S]_{mult}$ of Lemma \ref{lem:sseq},
\begin{itemize}
\item   the left Kan extension of 
{\small $$\cat A^{op}\vee \cat{Comp} \vee \cat A^{op} \xrightarrow{\Phi\times \Psi\times \Psi' \times \Xi} \S:\big (f, (g,g'), h\big)\mapsto \Phi (f)\times \Psi (g)\times \Psi' (g')\times \Xi (h)$$}
along $\operatorname{comp}^{(2)}\zeta$ corresponds to $\op X_{\Phi}\circ  (\op X_{\Psi}\square \op X_{\Psi'})\circ \op X_{\Xi},$
\item  the left Kan extension of 
{\small $$\cat {Comp}(\cat{Comp}) \xrightarrow{\Phi\times \Psi\times \Xi \times \Omega} \S:\big (f, (g,g'), h\big)\mapsto \Phi (f)\times \Psi(g)\times \Xi (g')\times \Omega (h)$$}
along $\operatorname{comp}^{(2)}$ corresponds to $\op X_{\Phi}\circ  \op X_{\Psi}\circ \op X_{\Psi'}\circ \op X_{\Xi}.$
\end{itemize}

For any $\Phi, \Psi, \Phi', \Psi'\in [\cat A^{{op}}, \S]_{mult}$, consider the following augmented version of commutative diagram (\ref{eqn:functor-diagram}).
\begin{equation}\label{eqn:natltransfdiag}\xymatrix{\cat{Cospan}(\cat {Comp})\ar [dd]_{\omega}\ar [ddrr]^{\Phi\times \Psi\times \Phi'\times \Psi'}\ar [rrrr]^{\theta}&&&&\cat A^{op}\vee \cat{Comp} \vee \cat A^{op}\ar [ddll]_{\Phi\times \Psi\times \Phi'\times \Psi'}\ar [dd]^{\chi}\\
\\
\cat {Comp}(\cat {Cospan})\ar [dd]_{\kappa}\ar [rr]^(0.6){\Phi\times \Phi'\times \Psi\times \Psi'}&&\S &&\cat A^{op}\vee \cat{Comp} \vee \cat A^{op}\ar[ll]_(0.6){\Phi\times \Phi'\times \Psi\times \Psi'}\ar [dd]^{\zeta}\\
\\
\cat {Comp}(\cat {Comp})\ar[drr]_{\operatorname{comp}^{(2)}}\ar [uurr]_{\Phi\times \Phi'\times \Psi\times \Psi'}&&&& \cat {Comp}(\cat {Comp})\ar[dll]^{\operatorname{comp}^{(2)}}\ar [uull]^{\Phi\times \Phi'\times \Psi\times \Psi'}\\
&&\cat A^{op}}
\end{equation}

Each triangle in the diagram above can be filled in with a natural tranformation, to create a commuting diagram of natural transformations.  First, it is obvious that 
$$\Phi\times \Psi\times \Phi'\times \Psi' =(\Phi\times \Phi'\times \Psi\times \Psi')\chi,$$ 
which corresponds to the symmetry of the matrix monoidal product. We know moreover that there is a natural transformation $$\mathfrak t: \Phi\times \Psi\times \Phi'\times \Psi' \to (\Phi\times \Phi'\times \Psi\times \Psi')\omega,$$ 
giving rise to the natural transformation $\sigma$.  By very similar arguments, using the natural maps $\delta_{(-)}$ (cf. Notation \ref{notn:delta}), one can show easily that there are natural transformations
$$\widetilde{\mathfrak t}: \Phi\times \Psi\times \Phi'\times \Psi' \to (\Phi\times \Psi\times \Phi'\times \Psi')\theta,$$
corresponding to $\widetilde \sigma$,
$$\mathfrak i: \Phi\times \Phi'\times \Psi\times \Psi'\to (\Phi\times \Phi'\times \Psi\times \Psi')\kappa,$$
corresponding to $\iota\circ \iota$, and
$$\widetilde{\mathfrak \i}:\Phi\times \Phi'\times \Psi\times \Psi'\to (\Phi\times \Phi'\times \Psi\times \Psi')\zeta,$$
corresponding to $\id\circ \iota \circ \iota$, such that
$$(\mathfrak i*\omega) \mathfrak t = (\widetilde{\mathfrak \i}*\chi\theta)\widetilde{\mathfrak t},$$
from which we can conclude that (\ref{eqn:final}) does indeed hold.
\end{proof}

 \bibliographystyle{amsplain}
\bibliography{divpower}
\end{document}